\documentclass[letterpaper, 10 pt, conference]{ieeeconf} 
\usepackage[utf8]{inputenc} 
 
\usepackage{algorithm}
\usepackage{algpseudocode}
\usepackage{amsmath,amssymb,amsthm}
\usepackage{graphicx}
\usepackage{subcaption}
\usepackage{booktabs}
\usepackage{placeins}
\usepackage{hyperref}
\usepackage{multirow}
\usepackage{caption}
\usepackage{todonotes}
\usepackage{pifont}
\usepackage{mathtools}
\let\labelindent\relax
\usepackage{enumitem}
\setlist[itemize]{leftmargin=*}
\setlist[enumerate]{leftmargin=*}

\newtheorem{assumption}{Assumption}
\newtheorem{definition}{Definition}
\newtheorem{problem}{Problem}
\newtheorem{theorem}{Theorem}
\newtheorem{lemma}{Lemma}
\newtheorem{proposition}{Proposition}
\newtheorem{remark}{Remark}
\makeatletter
\def\endfigure{\end@float}
\makeatother

\usepackage{setspace}
\usepackage{ifthen}
\newboolean{showcomments}
\setboolean{showcomments}{true}
\usepackage{color}
\usepackage{todonotes}

\definecolor{bleudefrance}{rgb}{0.19, 0.55, 0.91}
\definecolor{ao(english)}{rgb}{0.0, 0.5, 0.0}

\newcommand{\addcite}[0]{\ifthenelse{\boolean{showcomments}}
{\textcolor{purple}{(add cite(s)) }}{}}%

\newcommand{\addcites}[0]{\ifthenelse{\boolean{showcomments}}
{\textcolor{purple}{(add cite(s)) }}{}}%

\newcommand{\addref}[0]{\ifthenelse{\boolean{showcomments}}
{\textcolor{purple}{(add ref) }}{}}%

\newcommand{\enrique}[1]{  \ifthenelse{\boolean{showcomments}}
{\todo[inline,color=bleudefrance]{Enrique: #1}}{}}
\newcommand{\jixian}[1]{  \ifthenelse{\boolean{showcomments}}
{\todo[inline,color=lightgray]{Jixian: #1}}{}}
\newcommand{\zhuo}[1]{\ifthenelse{\boolean{showcomments}}
{\todo[inline,color=olive]{Zhuo: #1}}{}}

\newcommand{\hl}[1]{\ifthenelse{\boolean{showcomments}}
{\textcolor{red}{#1}}{#1}}

\newboolean{showedits}
\setboolean{showedits}{false}
\usepackage[markup=underlined]{changes}
\definechangesauthor[color=bleudefrance]{EM}
\newcommand{\aem}[1]{
\ifthenelse{\boolean{showedits}}
{\added[id=EM]{#1}}
{\!#1\hspace{-4.75pt}}
}
\newcommand{\repem}[2]{
\ifthenelse{\boolean{showedits}}
{\replaced[id=EM]{#1}{#2}}
{\!#1\hspace{-4.75pt}}
}
\newcommand{\dem}[1]{
\ifthenelse{\boolean{showedits}}
{\deleted[id=EM]{#1}}
{}
}

\usepackage[backend=biber, style=ieee, doi=false, url=false, eprint=false]{biblatex}
\newcommand{\K}{\mathcal{K}}
\newcommand{\X}{\mathcal{X}}

\newcommand{\R}{\mathbb{R}}

\newcommand{\tgt}{\mathrm{tgt}}
\newcommand{\supp}{\mathrm{supp}}
\newcommand{\Supp}{\mathrm{Supp}}
\newcommand{\cB}{\mathcal{B}}
\newcommand{\cA}{\mathcal{A}}
\newcommand{\cD}{\mathcal{D}}

\usepackage{amsmath}
\newcommand\restr[2]{{
  \left.\kern-\nulldelimiterspace
  #1
  \littletaller
  \right|_{#2} 
  }}
\newcommand{\littletaller}{\mathchoice{\vphantom{\big|}}{}{}{}}

\IEEEoverridecommandlockouts
\title{\LARGE \bf 
Symplectic Inductive Bias for Data-Driven Target Reachability in Hamiltonian Systems
}

\author{Zhuo Ouyang, Jixian Liu, and Enrique Mallada
\thanks{Zhuo Ouyang is with the College of Engineering, Peking University, BJ 100091, P.R.C. {\tt\small 2200011199@stu.pku.edu.cn}. J. Liu and E. Mallada are with the Department of Electrical and Computer Engineering, Johns Hopkins University, MD 21218, U.S.A. 
        {\tt\small jliu376@jh.edu, mallada@jhu.edu}.}
\thanks{This work was supported by the NSF Global Centers program under Grant No.~2330450 and by the DOE Office of Science (ASCR) under Award No.~826565.
}}

\begin{document}
\maketitle
\thispagestyle{empty}
\pagestyle{empty}

\maketitle
\thispagestyle{empty}
\pagestyle{empty}

\begin{abstract}
Inductive bias refers to restrictions on the hypothesis class that enable a learning method to generalize effectively from limited data. A canonical example in control is linearity, which underpins low sample-complexity guarantees for stabilization and optimal control. For general nonlinear dynamics, by contrast, guarantees often rely on smoothness assumptions (e.g., Lipschitz continuity) which, when combined with covering arguments, can lead to data requirements that grow exponentially with the ambient dimension. In this paper we argue that data-efficient nonlinear control demands exploiting inductive bias embedded in nature itself—namely, structure imposed by physical laws. Focusing on Hamiltonian systems, we leverage symplectic geometry and intrinsic recurrence on energy level sets to solve target reachability problems. Our approach combines the recurrence property with a recently proposed class of policies, called chain policies, which composes locally certified trajectory segments extracted from demonstrations to achieve target reachability. We provide sufficient conditions for reachability under this construction and show that the resulting data requirements depend on explicit geometric and recurrence properties of the Hamiltonian rather than the state dimension.
\end{abstract}
\section{Introduction} \label{sec:intro}

A central challenge in data-driven control is how to achieve reliable generalization from limited data. In learning theory, this capability is governed by inductive biases, namely, structural restrictions on the hypothesis class—the set of models or control laws considered—that enable generalization~\cite{vapnik1998statistical,shalev2014understanding,bishop2006pattern}. By limiting the class complexity, inductive bias determines how solutions inferred from finite data extend beyond observed data, and how data requirements scale with problem complexity. In control, such inductive bias typically appears through assumptions on the system model, such as linearity, polynomial structure, or smoothness (e.g., Lipschitz continuity), which define the class of admissible dynamics or policies over which guarantees are derived.

For linear systems, this structure leads to tractable and often sample-efficient data-driven control methods. A broad class of problems—including system identification, stabilization, and optimal control—can be addressed directly from data using a range of techniques, such as convex optimization, LMI-based formulations, and trajectory-based predictive control~\cite{oymak2018nonasymptotic,zheng2021nonasymptotic,hu2022sample,werner2024sample,toso2025learning,dean2020sample,depersis2019formulas,coulson2019data,berberich2020data}. In several cases, these approaches admit rigorous finite-sample guarantees that explicitly characterize how data requirements scale with system dimension and control objectives~\cite{oymak2018nonasymptotic,zheng2021nonasymptotic,hu2022sample,werner2024sample,dean2020sample}. As a result, the linear setting provides a relatively rich understanding of the interplay between data, computation, and control objectives.

In contrast, for nonlinear systems, there is no a priori canonical inductive bias, and different modeling assumptions—such as polynomial, rational, or Lipschitz models—lead to significantly different methodologies and outcomes~\cite{dai2020semi,guo2021data,strasser2021data,monshizadeh2025versatile}. While these approaches enable controller synthesis with explicit guarantees, sample complexity bounds remain comparatively scarce. A few exceptions include sample complexity results for stability~\cite{boffi2020learning}, stabilizability~\cite{siegelmann2025data}, and reachability analysis~\cite{lew2022simple}. Notably, in all these cases, the resulting data requirements scale \textit{exponentially with the state dimension}, reflecting the intrinsic difficulty of data-driven control in nonlinear systems.

The above-mentioned results suggest a fundamental limitation in data-driven control for nonlinear systems. In this paper, we argue that this apparent limitation stems from the choice of function class—such as polynomial or Lipschitz models—which fail to capture the rich structure embedded in physical systems. To overcome this, we advocate for an inductive bias grounded in physical laws. In particular, we focus on Hamiltonian systems, where the dynamics are governed by an energy function and evolve on invariant energy level sets. Leveraging their symplectic structure and intrinsic recurrence properties, we study the problem of target reachability and show how these features can be used to design data-driven control policies that fundamentally alter the dependence of sample complexity on system dimension.

Our approach builds on a novel class control policies, called  chain policies~\cite{siegelmann2025data}, which construct control strategies by composing locally validated control segments derived from data. By recurrently composing the execution control segments, chain policies guarantee certain recurrent conditions that is sufficient for stability and safety in nonlinear systems~\cite{siegelmann2023recurrence,sibai2026recurrence,liu2025recurrent,liu2025safety}. In the Hamiltonian setting, however, recurrence is not imposed as a design principle but arises intrinsically from the dynamics on invariant energy layers, creating natural opportunities to reuse locally valid behaviors. This leads to a reachability framework where global performance can be achieved from a finite collection of trajectory segments. We establish sufficient conditions under which target reachability is guaranteed and show that the associated data requirements depend on intrinsic geometric and recurrence properties of the Hamiltonian—such as energy variation and ergodic structure—rather than the ambient state dimension.

The remainder of the paper is organized as follows. Section~\ref{sec:preliminaries} introduces the Hamiltonian system model and formulates the target reachability problem. Section~\ref{sec:Reachability} presents the main theoretical results, including reachability guarantees and sample complexity bounds under chain policies. Section~\ref{sec:numerical_simu} provides numerical validation on representative systems, and Section~\ref{sec:conclusions_and_future_work} concludes with a discussion of future directions.

\vspace{1ex}
\noindent\emph{Notation:}
We denote by $\|\cdot\|$ the Euclidean norm. For $x \in \mathbb{R}^n$ and $r>0$, let $\cB_r(x) := \{ w \in \mathbb{R}^n \mid \|x-w\| \le r \}$ be the closed ball of radius $r$ centered at $x$. For a compact set $S\subseteq \mathbb{R}^n$, let $D_S:=\sup_{x,y\in S}\|x-y\|$ denote its diameter, $\overline{S}$ its closure, $\partial S$ its boundary, and $\operatorname{int}(S)$ its interior. For  $\forall x\in\mathbb{R}^n$, the distance from $x$ to set $S$ is $\mathrm{d}(x,S):=\inf_{y\in S}\|x-y\|$. $\lceil \cdot \rceil$ and $\lfloor \cdot \rfloor$ denote the ceiling and floor operators, respectively.

\section{Preliminaries and Problem Formulations}
\label{sec:preliminaries}
\subsection{Hamiltonian System}
In this paper, we firstly review mathematical formulation of \textbf{Hamiltonian System}.

\begin{definition}[Hamiltonian System]
A Hamiltonian system without energy dissipation is a dynamical system of the form
\begin{align}
\label{eq:Hamil_sys}
    \dot{x} = f(x,u) = J(x)\nabla H(x) + G(x)u,
\end{align}
where $x \in \X \subset \R^n$ is the state defined on a compact set $\X$, $H:\R^n \to \R$ is the Hamiltonian function, $J(x)\in\R^{n\times n}$ is skew-symmetric, and $u\in U\subset \R^m$ is the input of the system defined on a compact U.
\end{definition}

Physically, $H(x)$ describes the energy of system~\eqref{eq:Hamil_sys}, i.e., sum of kinetic and potential energy in mechanical system, while the skew-symmetric matrix $J(x)$ encodes the intrinsic power-conserving structure. In particular, $J(x)$ determines how energy flows between state variables without creating or dissipating energy. 
We make the following assumptions about system \eqref{eq:Hamil_sys}.

\begin{assumption}[Bounded Hamiltonian Gradient]
\label{ass:bounded_gradient}
The Hamiltonian function $H$ is continuously differentiable on $\X$ and there exists an upper bound $L_H > 0$ such that $\forall x\in\X, \|\nabla H(x)\|\le L_H$.
\end{assumption}

\begin{assumption}[Lipschitz Continuity of the Dynamics]
\label{ass:lipschitz_dynamics}
The dynamics $f(x,u)$ in~\eqref{eq:Hamil_sys} are Lipschitz continuous w.r.t. $x$ in both the state and the input. Namely, for any $u \in U$, there exist constants $L>0$ such that
\begin{align*}
     \|f(x_1,u)-f(x_2,u)\|
    \le
    L\|x_1-x_2\|, \notag \forall x_1,x_2\in\X.
\end{align*}
\end{assumption}

\begin{remark}[Bounded Vector Field]
Since $\X$ and $U$ are compact, and $f$ is continuous by Assumption~\ref{ass:lipschitz_dynamics}, there exists a constant $C_f>0$ such that
\begin{align}
\label{rem:bounded_vector_field}
\|f(x,u)\|\le C_f, \forall x\in\X,\ \forall u\in U.
\end{align}
\end{remark}

Since the Hamiltonian $H(\cdot)$ is conserved along the zero-input dynamics, it is natural to partition the state space into invariant energy layers.

\begin{definition}[Energy Layer]
For each energy value $E \in H(\mathcal X)$, the corresponding energy layer is defined as
\begin{align*}
\Sigma_E := \{x \in \mathcal X : H(x)=E\}.
\end{align*}
\end{definition}

$\Sigma_E$ is called an invariant energy layer because, under the zero-input, every trajectory starting in $\Sigma_E$ remains in $\Sigma_E$ for all future times.

\subsection{Target Reachability Problem}

We now formalize the control objective considered in this paper. Given the Hamiltonian structure introduced above, our goal is to design control inputs that steer the system from a set of admissible initial conditions to a desired target set.

Let $\mathcal{U}^{(0,t]}$ denote the set of admissible control signals on $(0,t]$, where each $u:(0,t]\to U$ is piecewise continuous (and measurable); we also use $\mathcal{U}:=\mathcal{U}^{(0,\infty)}$.
Further, given any initial condition $x\in\mathcal{X}$ and control input $u\in \mathcal{U}^{(0,t]}$, we use $\phi(t,x,u)$ to denote the state of system~\eqref{eq:Hamil_sys} at time $t$.

Let $S_0 \subseteq \mathcal{X}$ be a compact set of admissible initial states, and let $S_{\tgt} \subseteq \mathcal{X}$ be the prescribed target set.

\begin{problem}[Target Reachability]\label{prob:target reachability}
Given system~\eqref{eq:Hamil_sys}, an initial state $x_0 \in S_0$, and a target set $S_{\tgt}$, determine a control signal $u \in \mathcal{U}^{(0,t]}$ and a time $t>0$ such that
\[
\phi(t,x_0,u) \in S_{\tgt}.
\]
\end{problem}

\subsection{Recurrence on Energy Layers}
\label{sec:recurrence_on_energy_layers}

We next recall the recurrence structure of the zero-input dynamics associated with~\eqref{eq:Hamil_sys}. Since the system is lossless, the Hamiltonian is conserved along zero-input trajectories. Hence, for each energy value $E\in H(\X)$, the energy layer $\Sigma_E:=\{x\in\X:H(x)=E\}$
is invariant under the zero-input flow $\phi(t,x,0)$. Our interest is in how trajectories repeatedly revisit dynamically relevant regions on each compact invariant energy layer. We first introduce invariant measures, which describe measures preserved by the zero-input flow.

\begin{definition}[Invariant Measure]
A probability measure $\mu$ on $M$ is said to be invariant under the flow $\phi$ if for any measurable set $A \subseteq M$ and any $t \ge 0$, $\mu(\phi(t,A)) = \mu(A).$
\end{definition}

To further characterize whether trajectories explore the whole invariant set or remain confined to smaller invariant subsets, we next introduce ergodicity.

\begin{definition}[Ergodic Measure]
Let $\mu$ be an invariant probability measure on a set $M$. The measure $\mu$ is said to be ergodic if for any measurable set $A \subseteq M$ that is invariant under the flow, i.e., $\phi(t,A,0) \subseteq A$ for all $t \ge 0$, it holds that $\mu(A) \in \{0,1\}$.
\end{definition}

Intuitively, ergodicity means that trajectories are not confined to smaller invariant subsets, but instead propagate throughout $M$. In particular, for almost every initial condition, trajectories are dense in the support of $\mu$.
The next theorem~\cite[Theorem 5.1.3]{viana2016foundations} illustrates how every finite invariant measure admits an ergodic decomposition.

\begin{theorem}[Ergodic Decomposition on an Energy Layer]
\label{thm_ergodic}
Let $\mu_E$ be an invariant measure on $\Sigma_E$. Then there exists a measurable family of ergodic measures $\{\mu_\alpha^E\}_{\alpha\in\cA_E}$ and a probability measure $\nu_E$ on the index set $\cA_E$ such that
$\mu_E = \int_{\cA_E} \mu_\alpha^E\, d\nu_E(\alpha)$.
\end{theorem}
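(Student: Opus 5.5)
The statement is the classical ergodic decomposition theorem (cited from Viana--Oliveira) specialized to the invariant compact layer $\Sigma_E$. My plan is to reduce it to the abstract theorem on a compact metric space with a continuous flow. First I would check the standing hypotheses. $\Sigma_E=H^{-1}(E)\cap\X$ is closed in the compact set $\X$, since $H$ is continuous by Assumption~\ref{ass:bounded_gradient}, so it is a compact metric space. The zero-input vector field $J\nabla H$ is Lipschitz by Assumption~\ref{ass:lipschitz_dynamics}, so $\phi(t,\cdot,0)$ is a continuous flow. The flow leaves $\Sigma_E$ invariant because $\frac{d}{dt}H=\nabla H^\top J\nabla H=0$ by skew-symmetry. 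I will assume trajectories remain in $\X$, as the paper implicitly does. I would also normalize $\mu_E$ to a probability measure; this is harmless, since the statement is about a finite invariant measure.

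Next I construct the decomposition through Birkhoff averages. Fix a countable dense family $\{\varphi_k\}\subset C(\Sigma_E)$, which exists by separability. For each $x$, set $\tilde\varphi_k(x)=\lim_{T\to\infty}\frac1T\int_0^T\varphi_k(\phi(s,x,0))\,ds$. By the Birkhoff ergodic theorem, applied to $\mu_E$ and to every invariant measure, these limits exist on a full-measure invariant set $G$. For $x\in G$, the map $\varphi_k\mapsto\tilde\varphi_k(x)$ extends to a positive normalized linear functional on $C(\Sigma_E)$. By the Riesz representation theorem this gives a probability measure $\mu_x$, and $\mu_x$ is invariant because time averages are shift invariant. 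I then take the index set $\cA_E$ to be the quotient of $\Sigma_E$ by $x\sim y\iff \mu_x=\mu_y$, or more simply $\cA_E=\Sigma_E$ with $\alpha=x$, and let $\nu_E$ be the pushforward of $\mu_E$. Measurability of $x\mapsto\mu_x$ follows because each $x\mapsto\int\varphi_k\,d\mu_x=\tilde\varphi_k(x)$ is measurable. The decomposition identity then comes from $\int\tilde\varphi\,d\mu_E=\int\varphi\,d\mu_E$ (Birkhoff) plus density of $\{\varphi_k\}$, giving $\mu_E=\int\mu_x\,d\mu_E(x)$.

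The main obstacle is showing that $\mu_x$ is ergodic for $\mu_E$-a.e.\ $x$. I would handle it in two steps. First, use the disintegration (Rokhlin) of $\mu_E$ with respect to the invariant $\sigma$-algebra to identify $\mu_x$ with the conditional measures, so that $\mu_x$ gives full mass to the level set $\{y:\mu_y=\mu_x\}$. Second, show that for $\mu_x$-a.e.\ $y$ the Birkhoff averages of each $\varphi_k$ equal the constant $\int\varphi_k\,d\mu_x$. Concretely, I would verify $\int(\tilde\varphi_k-\int\varphi_k d\mu_x)^2\,d\mu_x=0$ for a.e.\ $x$ by integrating over $\mu_E$ and using the $L^2$ (von Neumann) convergence. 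Constant time averages for a dense family imply that every invariant $L^2$ function is $\mu_x$-a.e.\ constant, which is ergodicity. Countability of $\{\varphi_k\}$ lets me discard a single null set. Alternatively, since the paper is citing this as a known result, I could simply invoke \cite[Theorem 5.1.3]{viana2016foundations} once the compactness, continuity and invariance hypotheses above are verified.
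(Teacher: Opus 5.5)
Your proposal is correct. Its closing option is exactly what the paper does: the paper gives no argument for this statement beyond quoting Theorem~5.1.3 of Viana--Oliveira.

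The rest goes beyond the paper in two ways. First, you check the hypotheses the paper leaves implicit: $\Sigma_E$ is compact as a closed subset of $\X$, the zero-input flow is continuous, and $\Sigma_E$ is invariant because $\nabla H^\top J\nabla H=0$. You also normalize $\mu_E$, which the statement tacitly requires since $\nu_E$ and the $\mu_\alpha^E$ are probability measures. Second, you give a self-contained proof in which the components are the Riesz representatives $\mu_x$ of continuous-time Birkhoff averages, with $\nu_E=\mu_E$.

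The citation is shorter. Your construction describes $\mu_\alpha^E$ explicitly as the limiting empirical measure along an orbit, and it works with the flow directly. It also gives the paper's subsequent density proposition almost for free: for $\mu_x$-a.e.\ $y$ one has $\mu_y=\mu_x$, so the orbit of $y$ visits every open set of positive $\mu_x$-measure with positive frequency. Working with the flow matters if you take the citation route instead. A discrete-time decomposition applied to a time-$T$ map can give the wrong components. In the paper's spring--mass example with $m=k=1$, the time-$2\pi$ map is the identity on each energy circle, so its ergodic components are Dirac masses, which are not flow-invariant. You must therefore use the flow version of the theorem.

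Two tidy-ups in your sketch.

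\emph{The Rokhlin step is redundant.} The identity $\int\!\int(\tilde\varphi_k(y)-\tilde\varphi_k(x))^2\,d\mu_x(y)\,d\mu_E(x)=0$ follows directly from two facts. One is the decomposition identity, extended from continuous to bounded Borel functions and applied to $\tilde\varphi_k^2$. The other is $\int\tilde\varphi_k\,d\mu_x=\int\varphi_k\,d\mu_x=\tilde\varphi_k(x)$, which uses $\mu_x(G)=1$, invariance of $\mu_x$ and bounded convergence. This identity already gives $\mu_x(\{y:\mu_y=\mu_x\})=1$ for $\mu_E$-a.e.\ $x$. If you do want the conditional-measure picture, disintegrate over the countably generated $\sigma$-algebra generated by the $\tilde\varphi_k$, not the full invariant $\sigma$-algebra.

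\emph{Choice of $\cA_E$.} Your argument gives ergodicity of $\mu_x$ only for $\mu_E$-a.e.\ $x$. Take $\cA_E$ to be that full-measure set rather than all of $\Sigma_E$, so that every member of the family is ergodic, as the statement asserts.
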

For each $\alpha\in\cA_E$, define the corresponding support of the ergodic component by
\begin{align*}
K_\alpha^E := \supp(\mu_\alpha^E)\subseteq \Sigma_E,
\end{align*}
where $\supp(\mu):=\{x\in \Sigma_E:\mu(\cB_r(x))>0,\ \forall r>0\}$ is the smallest closed subset of $\Sigma_E$ that has full $\mu$-measure. In Theorem~\ref{thm_ergodic}, each measure $\mu_\alpha^E$ represents an ergodic component of the invariant dynamics on $\Sigma_E$, and $K_\alpha^E$ is the closed region where the corresponding ergodic dynamics take place. On each $K_\alpha^E$, typical trajectories are dense~\cite[Proposition 4.3.5]{viana2016foundations}.

\begin{proposition}[Density of Typical Trajectories in an Ergodic Support]
\label{proposition_density}
For $\mu_\alpha^E$-almost every $x\in K_\alpha^E$, the forward orbit of $x$ is dense in $K_\alpha^E$, namely
\begin{align*}
\overline{\{\phi(t,x,0):t\ge 0\}}=K_\alpha^E.
\end{align*}
Hence, for $\mu_\alpha^E$-almost every initial condition in $K_\alpha^E$, the zero-input trajectory visits every neighborhood of every point in $K_\alpha^E$ infinitely often.
\end{proposition}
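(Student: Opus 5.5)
The plan is to deduce the density statement from Birkhoff's ergodic theorem together with second countability of $K_\alpha^E$. I will work with the zero-input flow restricted to the compact invariant layer $\Sigma_E$ and the ergodic measure $\mu:=\mu_\alpha^E$ with support $K:=K_\alpha^E$. Because $\Sigma_E\subseteq\X\subset\R^n$ is a separable metric space, I will fix a countable base for the relative topology on $K$: the sets $B_{j,k}:=\operatorname{int}_{\Sigma_E}\cB_{1/k}(q_j)\cap K$, where $\{q_j\}$ is a countable dense subset of $K$ and $k\in\mathbb{N}$. By the definition of $\supp$, each of these sets has $\mu(B_{j,k})>0$. Shrinking radii slightly if necessary, I may take them to be relatively open balls, which still have positive measure.

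For each $(j,k)$ I will apply the continuous-time Birkhoff ergodic theorem to the indicator $\mathbf 1_{B_{j,k}}$. Ergodicity of $\mu$ gives, for $\mu$-a.e. $x$,
\[\lim_{T\to\infty}\frac1T\int_0^T \mathbf 1_{B_{j,k}}(\phi(t,x,0))\,dt=\mu(B_{j,k})>0.\]
The integrand is measurable in $t$ because $\phi$ is continuous. Positivity of this limit implies that the set of times $t$ with $\phi(t,x,0)\in B_{j,k}$ is unbounded, since otherwise the time average would tend to $0$. Let $N_{j,k}$ be the exceptional null set and $N:=\bigcup_{j,k}N_{j,k}$, which is still $\mu$-null by countable subadditivity. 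I also intersect with the full-measure set $K$, since $\mu(K)=1$.

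For $x\in K\setminus N$, the forward orbit then enters every basic open set of $K$ at arbitrarily large times. Two inclusions give density. First, the orbit closure contains $K$: any neighborhood of a point $y\in K$ contains some $B_{j,k}$ around $y$, and the orbit meets that set. Second, the orbit closure is contained in $K$: $K$ is closed and forward invariant. Invariance follows because $\mu(\phi(t,K))=\mu(K)=1$ and $\phi(t,\cdot)$ is a homeomorphism, so $\phi(t,K)$ is a closed full-measure set and hence contains $K$. The same argument applied with $-t$ gives equality. Together these yield $\overline{\{\phi(t,x,0):t\ge0\}}=K$. The "infinitely often" claim follows from the unboundedness of the visiting times established above.

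The step I expect to be the main obstacle is the measure-theoretic bookkeeping. The paper's definition of invariance uses images $\phi(t,A)$, and invariance of a set $A$ is phrased as $\phi(t,A,0)\subseteq A$. I will need to reconcile these with the standard preimage formulation used in Birkhoff's theorem, which works because $\phi(t,\cdot,0)$ is a bijection on $\Sigma_E$. I would also check that ergodicity in the paper's sense yields a constant Birkhoff limit. I would do this by noting that the set on which the limit exceeds any given constant is forward invariant, so it has measure $0$ or $1$. Alternatively, the whole statement can simply be cited from \cite[Proposition 4.3.5]{viana2016foundations}, with the above serving as a self-contained justification.
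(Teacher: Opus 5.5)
Your proof is correct. The paper does not prove this proposition at all; it imports it from \cite[Proposition 4.3.5]{viana2016foundations}. What you have written is therefore a self-contained proof of a fact the paper takes on citation.

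Compared with the citation, your version makes the adaptation to this paper's setting explicit in three ways:
\begin{itemize}
\item it works in continuous time for the zero-input flow;
\item it reconciles the paper's image-based definitions of invariance and ergodicity ($\mu(\phi(t,A))=\mu(A)$ and $\phi(t,A,0)\subseteq A$) with the preimage formulation that Birkhoff's theorem uses;
\item it proves invariance of $K_\alpha^E$ under the flow, which is what gives the inclusion $\overline{\{\phi(t,x,0):t\ge 0\}}\subseteq K_\alpha^E$.
\end{itemize}

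Your route also delivers the second sentence of the proposition directly, because a positive asymptotic visit frequency forces unbounded visiting times. By contrast, the paper's ``Hence'', which infers this from density of a single forward orbit, is not a purely topological consequence. It needs a small extra step, e.g.\ applying the first claim to $\phi(T,x,0)$ for all $T\in\mathbb{N}$ and using invariance of $\mu_\alpha^E$.

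The only cost of your route is that Birkhoff's theorem is more than you need. Ergodicity alone suffices:
\begin{itemize}
\item Take an open $U\subseteq\Sigma_E$ with $\mu_\alpha^E(U)>0$.
\item The set of points whose forward orbit never meets $U$ is forward invariant exactly in the paper's sense and is disjoint from $U$, so ergodicity makes it null.
\item Its preimages under $\phi(T,\cdot,0)$ for $T\in\mathbb{N}$ are also null by invariance, which gives visits at arbitrarily large times.
\item Intersecting over a countable base finishes the argument without any time averages.
\end{itemize}
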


\subsection{Chain Policies}
\label{sec:def_chain_policy}
Motivated by the nonparametric chain-policy idea in~\cite{siegelmann2025data},
we now specialize the policy construction to the reachability problem for the system~\eqref{eq:Hamil_sys}. The basic idea is to build a finite library of demonstrated control snippets and then select among them according to the current state.

Suppose we are given a finite set of expert demonstrations $\cD:=\{(x_j,u_j(\cdot),\tau_j)\}_{j=1}^M,$ where each $u_j:(0,\tau_j] \to U$ is a piecewise continuous control signal,
and $x_j$ is the corresponding initial states of the expert demonstrations' trajectories of~\eqref{eq:Hamil_sys}. In addition, let $u_0:(0,\tau_0]\to U$ be a prescribed default control signal,
where $\tau_0>0$.

\begin{definition}[Control Alphabet]
A \textbf{control alphabet} is a finite collection of control signals
\begin{align*}
\cA := \{u_i : (0, \tau_i] \to U\}_{i=0}^{M},
\end{align*}
where each $u_i$ is piecewise continuous and $\tau_i > 0$.
\end{definition}

The control alphabet provides a library of candidate control snippets. To determine where each snippet should be applied in the state space, we introduce an assignment set.

\begin{definition}[Assignment Set]
An \textbf{assignment set} is a finite collection of verification triples
\begin{align*}
\K := \{(x_i, r_i, u_i)\}_{i=1}^{N} \subseteq \R^n \times \mathcal{R}_{> 0} \times \cA,
\end{align*}
where $x_i \in \R^n$ is the center state point, $u_i \in \cA$ is the control signal assigned to that region, and $r_i > 0$ is its effective radius. The support of $\K$ is 
\begin{align*}
\Supp(\K) := \bigcup_{i=1}^{N} \cB_{r_i}(x_i),
\end{align*}
where $N:=\lvert \K \rvert$ is the size of the assignment set.
\end{definition}

While an assignment set specifies regions that the control is effective, it does not by itself resolve which control to apply when balls overlap, nor what to do when a state lies outside $\Supp(\K)$. Based on assignment set, we introduce
a normalized nearest-neighbor selection rule with a default fall-back option. For each $x\in\X$, define $\rho_{\K}(x) := \min_{1\le i\le N}\frac{\|x-x_i\|}{r_i}.$ The associated index map $\iota_{\K}:\X \to \{0,1,\dots,N\}$ is given by
\begin{align*}
\iota_{\K}(x):=
\begin{cases}
\displaystyle \arg\min_{1\le i\le N}\frac{\|x-x_i\|}{r_i},
& \rho_{\K}(x)\le 1,\\[1ex]
0, & \text{otherwise}.
\end{cases}
\end{align*}
Thus, if $x\in\Supp(\K)$, the rule selects the assignment whose normalized
distance is minimal; otherwise, it selects the default control $u_0$. In the
present analysis, we take $u_0$ to be the zero input, so that when the state
lies outside $\Supp(\K)$, the system follows the zero-input dynamics until it
re-enters the support of the assignment set. Building on this rule, we now
formalize the induced nonparametric policy.

\begin{definition}[Nonparametric Chain Policy]
Given an assignment set $\K$ and a default control $u_0$, the nonparametric
chain policy (NCP) is the map
\begin{align*}
    \pi_{\K}:\X\to\cA
\end{align*}
defined by $\pi_{\K}(x)=u_{\iota_{\K}(x)}.$
\end{definition}

\begin{remark}[Execution of the Nonparametric Chain Policy]
\label{rem:excution_of_NCP}
Given an initial state $x_0=x$, the policy $\pi_{\K}$ induces an infinite-horizon control signal by concatenation. For each $n\ge 0$, define recursively $u_n := \pi_{\K}(x_n), T_n := \tau(u_n),
x_{n+1} := \phi(T_n,x_n,u_n)$, where $\tau(u_n)$ denotes the duration of the selected control snippet $u_n$. Let $s_0:=0$ and $s_{n+1}:=s_n+T_n$. Then the induced control signal $u_{\K,x}:(0,\infty)\to U$ is defined by
\begin{align*}
u_{\K,x}(t):=u_n(t-s_n),\qquad t\in[s_n,s_{n+1}).
\end{align*}
\end{remark}

\section{Reachability in Hamiltonian Systems}
\label{sec:Reachability}
We are now ready to present the main results of this paper. We establish target reachability by combining two ingredients: local control actions that reduce an energy-based distance to the target, and recurrence of the zero-input Hamiltonian flow, which returns trajectories to regions where those actions can be reused. Repeating this interplay allows the trajectory to reach the target energy band and, ultimately, the target set. This separation between controlled energy reduction and passive recurrence underlies all results in this section.

\subsection{Target Reachability via Chain Policies}
\label{sec:general reachability}

As mentioned above, our strategy for reachability is energy-based. We aim to design control actions that drive the system toward the energy levels associated with the target set. 
Since $S_{\tgt}$ is compact and connected, its image under the Hamiltonian is an interval,
\[
H(S_{\tgt}) = [H_{\min}, H_{\max}], 
\]
where~$H_{\min}\!\!:=\!\min_{x \in S_{\tgt}}\!\! H(x)$ and $H_{\max}\!\!:=\!\max_{x \in S_{\tgt}}\!\! H(x).$

Once the trajectory reaches this energy band, the zero-input dynamics preserve energy, and the ergodic structure of each energy layer can be used to reach the target set, provided that the target is not dynamically isolated within the layer.
This motivates the following assumption.

\begin{assumption}[Ergodic Component Coverage]
\label{ass_target_set}
For every $E \in [H_{\min}, H_{\max}]$ and every ergodic component $K_\alpha^E \subseteq \Sigma_E$,
\[
S_{\tgt} \cap K_\alpha^E \neq \varnothing.
\]
\end{assumption}

\begin{remark}
    Assumption \ref{ass_target_set} is done for ease of exposition. Violation of this assumption would require a more sophisticated strategy that in philosophy does not depart from the presented here and is left for the journal version of this paper.
\end{remark}

To quantify progress toward the target set, we introduce an energy-based distance that measures how far a state lies from the target energy interval. This quantity will serve as a Barrier-like function that we aim to decrease through control actions.

\begin{definition}[Energy Signed Distance to $S_\mathrm{\tgt}$]
\label{def_energy_distance}
Let $H(S_{\tgt}) = [H_{\min}, H_{\max}]$, and define
\[
H^\star_+ := \frac{H_{\max}+H_{\min}}{2}, 
\qquad 
H^\star_- := \frac{H_{\max}-H_{\min}}{2}.
\]
The energy distance to the target set is defined as
\begin{align*}
\Delta H(x) := |H(x)-H^\star_+| - H^\star_-.
\end{align*}
In particular, $\Delta H(x) \le 0$ if and only if $H(x) \in H(S_\mathrm{\tgt})$.
\end{definition}

We are therefore interested in finding controls that bring the system toward the set
\[
H_{\tgt} := \{x \in \mathcal{X} : \Delta H(x) \leq 0\}.
\]
However, in order to provide guarantees, we will require our demonstrations to reach a slightly smaller set. Thus, for any $0 < \epsilon < H^\star_-$, we define
\[
H_{\tgt}^\epsilon := \{x \in \mathcal{X} : \Delta H(x)\leq -\epsilon\}.
\]
This leads to the following assumption.

\begin{assumption}[Reachability of $H_{\tgt}^\epsilon$]
\label{ass_reachability_of_the_target_set}
For all $x \in \mathcal{X} \setminus H_{\tgt}^\epsilon$, there exist $T > 0$ and $u \in \mathcal{U}^{(0,T]}$ such that
\[
\phi(T,x,u) \in H_{\tgt}^\epsilon.
\]
\end{assumption}

To quantify how quickly the system can be driven toward the target energy band, we introduce the corresponding first hitting time. For any $x \in \mathcal{X}$ and control signal $u \in \mathcal{U}$, define
\begin{align*}
T_\epsilon(x,u) := \inf\{t>0 : \phi(t,x,u)\in H_{\tgt}^\epsilon\}.
\end{align*}
Thus, using the energy distance $\Delta H(x)$, we can compute the average decrease rate as
\begin{align}
v_\epsilon(x,u) := \frac{\Delta H(x)-\Delta H(\phi(T_\epsilon(x,u),x,u))}{T_\epsilon(x,u)}.
\label{eq:energy_rate}
\end{align}
By definition $\phi(T_\epsilon(x,u),x,u)\in \partial H_{\tgt}^\epsilon$, thus
\begin{align*}
v_\epsilon(x,u)=\frac{\Delta H(x)+\epsilon}{T_\epsilon(x,u)}.
\end{align*}
Thus the best achievable decrease rate at $x$ is given by
\begin{align}
v_\epsilon(x):=\sup_{u\in\mathcal{U}} v_\epsilon(x,u)=\frac{\Delta H(x)+\epsilon}{T_\epsilon^\star(x)},
\label{eq:best_rate}
\end{align}
where $T^\star_\epsilon(x)$ is the optimal hitting time maximized \eqref{eq:best_rate}, i.e., $T^\star_\epsilon(x):=\inf_{u\in\mathcal U}T_\epsilon(x,u)$.
This leads to our final requirement.
\begin{assumption}[Uniform Positive Energy Decrease Rate]
\label{ass_lowest_speed}
There exists $\epsilon>0$ such that
\[
\underline{v_\epsilon} := \inf_{x\in \mathcal{X}\setminus H_{\tgt}^\epsilon} v_\epsilon(x) > 0.
\]
\end{assumption}

We will use the above assumptions to ensure uniform energy decrease toward the target energy band and repeated opportunities to apply control through recurrence of the zero-input dynamics. Once the trajectory reaches this energy band, the ergodic structure of the Hamiltonian flow ensures eventual arrival to the target set.

\begin{theorem}[Target Reachability]
\label{thm_general_reachability}
Consider system~\eqref{eq:Hamil_sys} under Assumptions~\ref{ass:bounded_gradient}--\ref{ass_lowest_speed}. 
Let $\mathcal{K} = \{(x_i, r_i, u_i)\}_{i=1}^N$ be a finite assignment set. 
Assume that $\mathcal{K}$ satisfies the following:

\begin{enumerate}
\item \textbf{Local energy decrease:}\label{condition1}
For each $(x_i,r_i,u_i)\in \mathcal{K}$,
\begin{align*}
\Delta H(x_i(\tau_i)) \!+\! v_0 \tau_i \!+\! L_H r_i e^{L \tau_i}
\!\le\! \Delta H(x_i) \!-\! L_H r_i,
\end{align*}
where $x_i(\tau_i):=\phi(\tau_i,x_i,u_i)$, $\tau_{\min}=\min_i\tau_i,\ \text{and }v_0 \textgreater 0$.
\item \textbf{Energy coverage:}\label{condition2}
Let $c := \sup_{x\in S_0} \Delta H(x)$. Then
\[
\Delta H(x)\le c \;\implies\; H(x)\in H(\Supp(\mathcal{K})).
\]
\item \textbf{Ergodic coverage:}\label{condition3}
For all $E \in H(\Supp(\mathcal{K}))$ and all ergodic components $K_\alpha^E \subseteq \Sigma_E$,
\[
\mathrm{int}(\Supp(\mathcal{K})) \cap K_\alpha^E \neq \emptyset.
\]
\end{enumerate}

Then, the chain policy $\pi_{\mathcal{K}}$ ensures that for almost every $x_0 \in S_0$, there exists $t<\infty$ such that
\[
\phi(t,x_0,\pi_{\mathcal{K}}) \in S_{\tgt}.
\]
\end{theorem}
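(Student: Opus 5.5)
The proof runs in three phases along the closed-loop execution of Remark~\ref{rem:excution_of_NCP}: (i) a one-step estimate showing that each application of a snippet from $\K$ lowers $\Delta H$ by at least $v_0\tau_{\min}$; (ii) a recurrence argument showing that, while $\Delta H>0$, the zero-input phases between snippet applications end in finite time; (iii) a final argument inside the target energy band using ergodicity and Assumption~\ref{ass_target_set}.

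\emph{Step 1: local energy decrease.} Let $x\in\cB_{r_i}(x_i)$ be a point where snippet $u_i$ is selected. By Gr\"onwall's inequality under Assumption~\ref{ass:lipschitz_dynamics}, $\|\phi(\tau_i,x,u_i)-x_i(\tau_i)\|\le r_i e^{L\tau_i}$. Since $\Delta H$ is $L_H$-Lipschitz by Assumption~\ref{ass:bounded_gradient} (we use $\bigl||a|-|b|\bigr|\le|a-b|$ and the fact that $\X$ is convex, or else work along segments), this gives
\begin{align*}
\Delta H(\phi(\tau_i,x,u_i))\le \Delta H(x_i(\tau_i))+L_H r_i e^{L\tau_i}.
\end{align*}
Likewise $\Delta H(x_i)\le \Delta H(x)+L_H r_i$. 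Combining both bounds with condition~\ref{condition1} yields
\begin{align*}
\Delta H(\phi(\tau_i,x,u_i))\le \Delta H(x)-v_0\tau_i\le \Delta H(x)-v_0\tau_{\min}.
\end{align*}
Zero-input phases preserve $H$, hence preserve $\Delta H$. So along the execution, $\Delta H$ is nonincreasing and drops by at least $v_0\tau_{\min}$ at every snippet application. Starting from $\Delta H(x_0)\le c$, at most $\lceil (c+H^\star_-)/(v_0\tau_{\min})\rceil$ snippets can be applied before $\Delta H\le 0$, because $\Delta H\ge -H^\star_-$ always. We also need sublevel-set invariance: every visited state satisfies $\Delta H\le c$, so condition~\ref{condition2} remains available at every state along the trajectory.

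\emph{Step 2: recurrence between snippets.} Suppose the state $x_n$ lies outside $\Supp(\K)$ with $E=H(x_n)$. Condition~\ref{condition2} gives $E\in H(\Supp(\K))$. We decompose an invariant measure on $\Sigma_E$ (the Liouville-type measure on the layer) via Theorem~\ref{thm_ergodic}. By condition~\ref{condition3}, the open set $\mathrm{int}(\Supp(\K))$ meets every ergodic support $K^E_\alpha$. Proposition~\ref{proposition_density} then says that, for $\mu^E_\alpha$-a.e.\ point, the zero-input orbit enters $\mathrm{int}(\Supp(\K))$ in finite time, at which point some snippet fires.

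\emph{Main obstacle: turning this into an ``almost every $x_0\in S_0$'' statement.} The exceptional set of non-typical points must be tracked through the snippet maps $x\mapsto\phi(\tau_i,x,u_i)$ over finitely many switches, so we need the preimage of a null set to stay null. My first attempt: argue that the Liouville measure disintegrates over energy layers into the $\mu_E$'s (coarea formula, using $\nabla H\neq0$ generically), so that a null set of bad points in each layer gives a Lebesgue-null bad set in $\X$. The snippet flows are diffeomorphisms of finite-time ODEs, hence they map null sets to null sets. A countable (indeed finite-depth) union of exceptional sets therefore remains null. Any residual gap, such as positive-measure sets of points on which the assumption is needed, I would state explicitly as a measure-theoretic hypothesis on the layers.

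\emph{Step 3: reaching the target.} After finitely many snippets, the trajectory satisfies $\Delta H\le 0$, i.e., $H(x)\in[H_{\min},H_{\max}]$. From this point I would argue that once the state is in this band the policy may keep firing snippets, but each snippet still decreases $\Delta H$ strictly. Since $\Delta H\ge -H^\star_-$, only finitely many further snippets can occur. After the last one, the motion is purely zero-input on a fixed layer $\Sigma_E$ with $E\in H(S_{\tgt})$. For a typical point (the same measure-zero bookkeeping as in Step 2), the orbit is dense in its ergodic support $K^E_\alpha$. Assumption~\ref{ass_target_set} gives $S_{\tgt}\cap K^E_\alpha\neq\varnothing$. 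To pass from density to an actual hit, I would need either $\mathrm{int}(S_{\tgt})\cap K^E_\alpha\neq\varnothing$ or closedness plus recurrence; I would interpret the assumption in the interior sense. Either way, the orbit enters $S_{\tgt}$ at some finite $t$, which concludes the proof.
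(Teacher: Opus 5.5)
Your proposal follows the paper's proof essentially step for step and is correct at its level of rigor. Two of your additions repair points the paper's Step~3 passes over: the cap on the \emph{total} number of snippets (so snippets that still fire inside the target band are harmless), and the interior reading of Assumption~\ref{ass_target_set}. Your null-set bookkeeping, like the paper's, covers the snippet flows but not the zero-input phases, which end at first entry into $\Supp(\K)$ and so collapse open sets onto its boundary.

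One caution: combining your Step~1 with $\Delta H\ge -H^\star_-$ gives $\Delta H\ge -H^\star_-+v_0\tau_{\min}$ on all of $\Supp(\K)$, so $H^\star_+\notin H(\Supp(\K))$. But Condition~2, applied to a point of $S_{\tgt}$ with $H=H^\star_+$ (where $\Delta H=-H^\star_-\le c$), requires $H^\star_+\in H(\Supp(\K))$. So the hypotheses as literally stated cannot all hold, and once Condition~2 is read as covering only $\Delta H>0$, it is your Step~3, not recurrence to the support, that carries the argument inside the band.
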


\begin{proof}
We first establish three key points:  
(i) each control segment decreases the energy distance on its associated support ball,  
(ii) whenever the trajectory leaves the support, the zero-input dynamics return it to the support in finite time for almost every initial condition, and  
(iii) once the trajectory enters the target energy band, it reaches the target set in finite time for almost every initial condition.  
We then combine these three arguments to conclude the theorem.

\medskip
\noindent\textbf{Step 1: Energy decrease on the support.}
Let $y \in \Supp(\mathcal{K})$. Then there exists $i$ such that $y \in \mathcal{B}_{r_i}(x_i)$, i.e., $\|y-x_i\|\le r_i$. By Lipschitz continuity of the dynamics and Grönwall’s inequality,
\[
\|\phi(t,x_i,u_i)-\phi(t,y,u_i)\|\le r_i e^{Lt}.
\]
Since $\|\nabla H(x)\|\le L_H$, the function $\Delta H$ is Lipschitz with constant $L_H$, and therefore
\begin{align}
\label{eq:deltaH_transfer_proof}
\Delta H(\phi(\tau_i,y,u_i))
&\le \Delta H(\phi(\tau_i,x_i,u_i)) + L_H r_i e^{L\tau_i},\\
\label{eq:deltaH_initial_proof}
\Delta H(x_i)
&\le \Delta H(y) + L_H r_i.
\end{align}
Combining \eqref{eq:deltaH_transfer_proof}--\eqref{eq:deltaH_initial_proof} with Condition~\ref{condition1} yields
\begin{align}
\label{eq:DeltaH_decrease_proof}
\Delta H(\phi(\tau_i,y,u_i)) + v_0\tau_i \le \Delta H(y).
\end{align}
Thus, whenever the state lies in $\Supp(\mathcal{K})$, the corresponding control segment strictly decreases the energy distance to the target set.

\medskip
\noindent\textbf{Step 2: Return to the support.}
Suppose that after applying a control segment from some $y\in \Supp(\mathcal{K})$, the state
\[
y':=\phi(\tau_i,y,u_i)
\]
lies outside $\Supp(\mathcal{K})$. From \eqref{eq:DeltaH_decrease_proof}, we have
\[
\Delta H(y') \le \Delta H(y).
\]
Since the trajectory starts from $S_0$ and $\Delta H$ decreases along each controlled execution, it follows that
\[
\Delta H(y') \le c := \sup_{x\in S_0}\Delta H(x).
\]
By Condition~\ref{condition2}, this implies that $H(y')\in H(\Supp(\mathcal{K}))$. Notably, for a point $y'$ s.t. $H(y')\in H(\Supp(\mathcal{K}))$,  the chain policy applies $u\equiv 0$, so the Hamiltonian is preserved and the trajectory remains on the energy layer
\[
\Sigma_E, \quad E:=H(y').
\]
By Condition~\ref{condition3}, $\mathrm{int}(\Supp(\mathcal{K}))$ intersects every ergodic component $K_\alpha^E\subseteq \Sigma_E$. Hence, by Theorem~\ref{thm_ergodic} and Proposition~\ref{proposition_density}, for almost every initial condition $y'\in \Sigma_E$, the zero-input trajectory $\phi(t,y',0)$ is dense in its ergodic component. Therefore, for almost every $y'$ s.t. $H(y')\in H(\Supp(\mathcal{K}))$,  there exists a finite time $T>0$ such that
\[
\phi(T,y',0)\in \Supp(\mathcal{K}).
\]

\medskip
\noindent\textbf{Step 3: Reachability inside the target energy band.}
Suppose now that $y\in H_{\tgt}$, i.e., $H(y)\in [H_{\min},H_{\max}]$. By Assumption~\ref{ass_target_set}, the target set $S_{\tgt}$ intersects every ergodic component of the energy layer $\Sigma_{H(y)}$. Since the zero-input dynamics preserve the Hamiltonian, the trajectory remains on $\Sigma_{H(y)}$. Therefore, by Theorem~\ref{thm_ergodic} and Proposition~\ref{proposition_density}, for almost every initial condition $y\in H_{\tgt}$, the zero-input trajectory is dense in its ergodic component and hence intersects $S_{\tgt}$ in finite time. That is, for almost every $y\in H_{\tgt}$, there exists $T'>0$ such that $\phi(T',y,0)\in S_{\tgt}.$

\medskip
\noindent\textbf{Conclusion.}
Starting from any $x_0 \in S_0$, the chain policy alternates between controlled segments and zero-input evolution. By Step~1, each controlled execution decreases $\Delta H$ by at least $v_0\tau_{\min}>0$, and thus after at most $N_*=\left\lceil \frac{c}{v_0\tau_{\min}} \right\rceil$ executions the trajectory enters $H_{\tgt}$. By Steps~2 and~3, each excursion outside the support returns in finite time, and once in $H_{\tgt}$ the trajectory reaches $S_{\tgt}$ in finite time for almost every initial condition.

Since only finitely many such events occur and the flow maps are continuous, the union of all exceptional null sets remains null after finitely many concatenation of controls. Therefore, for almost every $x_0 \in S_0$, there exists $t<\infty$ such that $\phi(t,x_0,\pi_{\mathcal K}) \in S_{\tgt}.$
\end{proof}

\begin{remark}
Theorem~\ref{thm_general_reachability} shows that reachability does not require coverage of the full state space. Instead, it is sufficient to cover (i) the one-dimensional energy interval connecting $S_0$ to $S_{\tgt}$, and (ii) the ergodic components within each corresponding energy layer.
This reduces the coverage requirement from the full $n$-dimensional state space to a structure parameterized by energy and the ergodic index $\alpha$. In particular, reachability can be achieved by covering a set whose effective dimension is that of $\alpha$ plus one, accounting for energy, without requiring demonstrations throughout the state space.
\end{remark}

\subsection{Existence of the Chain Policy}

Theorem~\ref{thm_general_reachability} provides conditions on the assignment set $\mathcal{K}$ under which the chain policy guarantees target reachability. However, it is not a priori clear whether such conditions can be satisfied using a finite set of control segments. To address this question, we first derive upper and lower bounds on the quantities that govern energy decrease and recurrence, which will allow us to establish existence and sample complexity guarantees for $\mathcal{K}$.

\begin{lemma}[Velocity and Hitting-Time Bounds]
\label{lemma:lower_bound_time}
Under Assumption~\ref{ass_lowest_speed}, the energy decrease rate and the first hitting time to $H_{\tgt}^\epsilon$ satisfy, for all $x \in \mathcal{X}\setminus H_{\tgt}$,
\begin{align*}
v_\epsilon(x) \le L_H C_f, \quad\text{ and }\quad
T_\epsilon^\star(x) \ge \frac{\epsilon}{L_H C_f}.
\end{align*}
\end{lemma}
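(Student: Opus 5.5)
The plan is to bound the time derivative of $\Delta H$ along any admissible trajectory, and then integrate that bound up to the hitting time. Fix $x\in\mathcal{X}\setminus H_{\tgt}$ and any $u\in\mathcal U$, and write $x(t):=\phi(t,x,u)$. The map $h\mapsto |h-H^\star_+|-H^\star_-$ is $1$-Lipschitz, and by Assumption~\ref{ass:bounded_gradient} $H$ is $C^1$ with $\|\nabla H\|\le L_H$ on $\X$. Hence $t\mapsto \Delta H(x(t))$ is absolutely continuous. For almost every $t$ we get
\[
\Bigl|\tfrac{d}{dt}\Delta H(x(t))\Bigr|\le \bigl|\nabla H(x(t))^\top f(x(t),u(t))\bigr|\le L_H C_f,
\]
where the last step uses the bounded vector field bound \eqref{rem:bounded_vector_field}. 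If one prefers to avoid differentiating the absolute value, an equivalent route is to use $L_H$-Lipschitzness of $\Delta H$ directly. Indeed $\|x(t)-x(s)\|\le C_f|t-s|$, so $|\Delta H(x(t))-\Delta H(x(s))|\le L_H C_f|t-s|$.

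Next I integrate up to the hitting time. By the definition of $T_\epsilon(x,u)$ and continuity of trajectories, $x(T_\epsilon(x,u))\in\partial H_{\tgt}^\epsilon$, so $\Delta H(x(T_\epsilon(x,u)))=-\epsilon$. The Lipschitz-in-time bound then gives
\[
\Delta H(x)+\epsilon \le L_H C_f\, T_\epsilon(x,u).
\]
Dividing by $T_\epsilon(x,u)$ yields $v_\epsilon(x,u)\le L_H C_f$. Taking the supremum over $u$ as in \eqref{eq:best_rate} gives $v_\epsilon(x)\le L_H C_f$. If $T_\epsilon(x,u)=\infty$ for some $u$, that $u$ contributes rate $0$ and is harmless.

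For the time bound I use that $x\notin H_{\tgt}$ means $\Delta H(x)>0$. So the same inequality gives $T_\epsilon(x,u)\ge (\Delta H(x)+\epsilon)/(L_H C_f) > \epsilon/(L_H C_f)$ for every $u$. Taking the infimum over $u$ yields $T_\epsilon^\star(x)\ge \epsilon/(L_HC_f)$. Assumption~\ref{ass_lowest_speed} and Assumption~\ref{ass_reachability_of_the_target_set} guarantee that these quantities are finite and meaningful, i.e.\ the infimum is over a nonempty set of finite hitting times.

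The main obstacle I expect is a technical one: the trajectory may leave $\X$. The bounds $L_H$ and $C_f$ are only stated on $\X$. I would handle this by interpreting admissible trajectories as remaining in $\X$, which is implicit in the paper's setup since $\phi$ is considered as a state in $\X$. With that convention the integration argument applies on $[0,T_\epsilon(x,u)]$ without further issue.
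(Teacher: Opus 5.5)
Your proposal is correct and matches the paper's proof: you bound $|\Delta H(x)-\Delta H(\phi(T,x,u))|\le L_H\|x-\phi(T,x,u)\|\le L_H C_f T$ and evaluate it at the hitting time, where $\Delta H=-\epsilon$, to get $v_\epsilon(x,u)\le L_H C_f$, and then use $\Delta H(x)>0$ off $H_{\tgt}$ to conclude $T_\epsilon^\star(x)\ge \epsilon/(L_H C_f)$. Your chain-rule variant, $\tfrac{d}{dt}H(x(t))=\nabla H(x(t))^\top f(x(t),u(t))$, is a slightly more careful form of the same argument, since it does not need $\Delta H$ to be globally $L_H$-Lipschitz on a possibly nonconvex $\X$.
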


\begin{proof}
From~\eqref{rem:bounded_vector_field}, we have
\begin{align*}
&|\Delta H(x) - \Delta H(\phi(T,x,u))|
\le L_H \|x - \phi(T,x,u)\| \\
&= L_H \Big\|\int_0^T f(\phi(t,x,u),u(t))\,dt\Big\|
\le L_H C_f T.
\end{align*}
Combining the above with~\eqref{eq:energy_rate}, it follows that
\[
v_\epsilon(x,u) = \frac{\Delta H(x)-\Delta H(\phi(T_\epsilon(x,u),x,u))}{T_\epsilon(x,u)} \le L_H C_f.
\]
Hence $v_\epsilon(x)\le L_H C_f$, and
\[
v_\epsilon(x)=\frac{\Delta H(x)+\epsilon}{T_\epsilon^\star(x)} \le L_H C_f,
\;\; \Rightarrow \;\;
T_\epsilon^\star(x)\ge \frac{\Delta H(x)+\epsilon}{L_H C_f}.
\]
Since $\Delta H(x)>0$ for all $x\in \mathcal{X}\setminus H_{\tgt}$, we obtain
\[
T_\epsilon(x,u)\ge T_\epsilon^\star(x)\ge \frac{\epsilon}{L_H C_f},
\quad \forall x\in\mathcal{X}\setminus H_{\tgt},\ u\in\mathcal{U}.
\]
\end{proof}

To establish the existence of the assignment set $\mathcal{K}$, we reduce the problem to a covering argument. In particular, constructing $\mathcal{K}$ requires (i) controlling the complexity of the dynamics within each energy layer, and (ii) relating spatial coverage of $\mathcal{X}$ to coverage of the corresponding energy values. The following two assumptions address these requirements.

\begin{assumption}[Ergodicity of Energy Layers]
\label{ass_dim_ergodic_components}
For every $E \in H(\Supp(\mathcal{K}))$, the energy layer $\Sigma_E$ is ergodic, i.e.,
\[
\forall E \in H(\Supp(\mathcal{K})),\quad K_\alpha^E = \Sigma_E.
\]
\end{assumption}

\begin{assumption}[Strong Convexity of the Hamiltonian]
\label{ass_strong_convexity}
The Hamiltonian function $H$ is strongly convex on $\mathcal{X}$, i.e., there exists $\mu_H>0$ such that for all $x,y\in\mathcal{X}$,
\[
H(y)\ge H(x)+\nabla H(x)^\top (y-x)+\frac{\mu_H}{2}\|y-x\|^2.
\]
\end{assumption}

\begin{remark}
Assumption~\ref{ass_dim_ergodic_components} reduces each energy layer to a single dynamically connected region, so that the relevant coverage is effectively one-dimensional and parameterized by energy. This corresponds to a simple setting, which includes, for example, Anosov energy surface and Axiom~A systems~\cite{plante1972anosov, hopf1971ergodic,bowen1975ergodic}. In the numerical section, we also consider examples where this assumption is not satisfied. Extending the analysis to more general ergodic decompositions is left for future work.
\end{remark}

\begin{remark}
\label{rem:Strong_Convexity}
Assumption~\ref{ass_dim_ergodic_components} ensures that each energy layer behaves as a single dynamically connected region, eliminating the need to cover multiple ergodic components. Assumption~\ref{ass_strong_convexity} provides a regular relationship between distance in state space and variation in energy, allowing geometric coverings of $\mathcal{X}$ to translate into coverage of the corresponding energy values. Together, these assumptions enable finite coverings that lead to the construction of $\mathcal{K}$.
\end{remark}

\begin{theorem}[Existence of Chain Policy]
\label{thm_existence_chain_policy}
Consider system~\eqref{eq:Hamil_sys} satisfying Assumptions~\ref{ass_reachability_of_the_target_set}--Assumption~\ref{ass_strong_convexity}. Let $v_0 \in (0,\underline{v_\epsilon})$.
Then there exists a nonparametric chain policy $\pi_{\mathcal{K}}$, constructed from a finite assignment set $\mathcal{K}=\{(x_i,r_i,u_i)\}_{i=1}^{N}$, where
\begin{align*}
r_i=\frac{(v_\epsilon(x_i)-v_0)T_\epsilon^\star(x_i)}{L_H(1+e^{LT_\epsilon^\star(x_i)})}>0,
\end{align*}
with $u_i:(0,T_\epsilon^\star(x_i)]\to {U}$ being the optimal control for reaching $\partial H_{\tgt}^\epsilon$ from $x_i$, $T_\epsilon^\star(x_i)$ denoting its hitting time, such that the following holds:
\begin{enumerate}
\item \textbf{Reachability:} For almost every $x_0\in S_0$, there exists $t<\infty$ such that $\phi(t,x_0,\pi_{\mathcal{K}})\in S_{\tgt}$.
\item \textbf{Sample complexity:} Let $H(\Supp(\mathcal{K}))=[H_1,H_2]$. Then
\[
N\leq (H_2-H_1)\frac{16L_H^2}{\mu_H (1-\frac{v_0}{\underline{v_\epsilon}})^2}\frac{\exp(\frac{2L(L_HD_{\mathcal{X}}+\epsilon)}{\underline{v_\epsilon}})}{\epsilon^2}.
\]
\end{enumerate}
\end{theorem}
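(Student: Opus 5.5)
We will obtain the statement by building an explicit assignment set and then invoking Theorem~\ref{thm_general_reachability}. Reachability then reduces to checking Conditions~\ref{condition1}--\ref{condition3} of that theorem for the constructed $\mathcal{K}$. The size bound will come from a one-dimensional covering in energy, made possible by Assumption~\ref{ass_strong_convexity}.

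\textbf{Step 1 (local energy decrease).} Fix a candidate center $x_i\notin H^\epsilon_{\tgt}$ and let $u_i$ be an (near-)optimal control with horizon $\tau_i=T^\star_\epsilon(x_i)$. Then $x_i(\tau_i)\in\partial H^\epsilon_{\tgt}$, so $\Delta H(x_i(\tau_i))=-\epsilon$. By \eqref{eq:best_rate}, $\Delta H(x_i)+\epsilon=v_\epsilon(x_i)\tau_i$, and Condition~\ref{condition1} becomes
\[
v_0\tau_i + L_H r_i\bigl(1+e^{L\tau_i}\bigr)\le v_\epsilon(x_i)\tau_i .
\]
This holds with equality for the stated $r_i$. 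Moreover $r_i>0$, because $v_\epsilon(x_i)\ge\underline{v_\epsilon}>v_0$ by Assumption~\ref{ass_lowest_speed}. If the infimum defining $T^\star_\epsilon$ is not attained, we use a near-optimal control and absorb the slack, since the bound on $N$ has room via the factor $16$.

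\textbf{Step 2 (uniform lower bound on the radii).} We need a radius bound that does not depend on the center. Lemma~\ref{lemma:lower_bound_time} gives $\tau_i\ge \epsilon/(L_HC_f)$. For an upper bound, $\tau_i=(\Delta H(x_i)+\epsilon)/v_\epsilon(x_i)\le (L_H D_{\mathcal X}+\epsilon)/\underline{v_\epsilon}$, using $\Delta H(x_i)\le L_H D_{\mathcal X}$ on $\mathcal X$. Writing $(v_\epsilon-v_0)\tau_i=(1-v_0/v_\epsilon)(\Delta H+\epsilon)\ge(1-v_0/\underline{v_\epsilon})\epsilon$ and $1+e^{L\tau_i}\le 2e^{L\tau_i}$, we get
\[
r_i\ \ge\ \underline r:=\frac{(1-v_0/\underline{v_\epsilon})\,\epsilon}{2L_H}\exp\!\Big(-\frac{L(L_HD_{\mathcal X}+\epsilon)}{\underline{v_\epsilon}}\Big).
\]

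\textbf{Step 3 (energy covering and count).} A ball $\mathcal B_{r}(x_i)$ contains a whole energy interval of length comparable to $\mu_H r^2$. To see this, move from $x_i$ along $\pm\nabla H(x_i)/\|\nabla H(x_i)\|$, or, near the minimizer, radially from the unique minimizer $x^*$. Strong convexity then gives an energy gain of at least $\mu_H r^2/2$ in one direction, and continuity of $H$ on the segment fills in the intermediate values. The bound should be taken on the half-radius ball, so that the covering also lies in $\mathrm{int}(\Supp(\mathcal K))$, which is what Condition~\ref{condition3} requires; this accounts for the $16=2\cdot 8$-type constants. The layers $\Sigma_E$ are connected by Assumption~\ref{ass_dim_ergodic_components}, and Assumption~\ref{ass_reachability_of_the_target_set} ensures every level in $[H_1,H_2]$ admits a center. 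We therefore place centers greedily along $[H_1,H_2]$ so that consecutive energy intervals overlap. This yields Condition~\ref{condition2}, with $[H_1,H_2]$ chosen to contain $\{E:\Delta H\le c\}\setminus H^\epsilon_{\tgt}$. It uses
\[
N\le (H_2-H_1)\cdot\frac{C}{\mu_H \underline r^{2}}
\]
balls, which after substituting $\underline r$ gives the stated bound. Condition~\ref{condition3} is then immediate: by Assumption~\ref{ass_dim_ergodic_components} each layer has a single component $\Sigma_E$, and each covered level meets the interior of some ball.

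\textbf{Main obstacle.} The delicate step is Step 3, namely turning a spatial ball into a guaranteed energy interval via strong convexity. This is subtle near the minimizer of $H$, where $\nabla H$ vanishes and the linear term gives nothing. Our first attempt is the radial argument from $x^*$, using $H(y)-H(x^*)\ge \tfrac{\mu_H}{2}\|y-x^*\|^2$, with the segment case split according to whether $\|\nabla H(x_i)\|$ dominates $\mu_H r$. We also need to check that the segments stay in $\mathcal X$, which may require shrinking the radii by a constant factor. This shrinking is again absorbed into the constant $16$.
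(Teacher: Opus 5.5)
Your plan follows the paper's proof step for step: the same canonical triples $(x,r(x),u_x)$ that make Condition~1 tight, and the same uniform radius bound $\underline r$ from $T^\star_\epsilon(x)\le (L_HD_{\X}+\epsilon)/\underline{v_\epsilon}$. It also uses an energy covering from strong convexity, and Assumption~\ref{ass_dim_ergodic_components} for Condition~3. Where it breaks is the constant in your Step~3. On $\cB_{r/2}(x)$, strong convexity only guarantees the interval $[H(x),H(x)+\frac{\mu_H}{8}r^2]$. Since $\underline r$ enters squared, the $2$ from $1+e^{L\tau_i}\le 2e^{L\tau_i}$ contributes $4$. The greedy count is then $N\le (H_2-H_1)\,8/(\mu_H\underline r^{2})$, which is the stated bound with $32L_H^2$ instead of $16L_H^2$. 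After that, nothing is left to ``absorb'' near-optimal controls or further shrinking into a fixed constant.

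The halving is unnecessary. The segment $\{x+sv:0\le s<r\}$ already lies in $\operatorname{int}\cB_r(x)$, and by the intermediate value theorem it sweeps $[H(x),H(x)+\frac{\mu_H}{2}r^2)$. So if you place each new center at the current frontier energy, you get interior hits and advance by essentially $\frac{\mu_H}{2}\underline r^2$. Any constant above $8$ then works, leaving a factor $2$ for near-optimality slack. The paper reaches $16$ through the cruder $\frac{\mu_H}{4}r^2$ on the closed ball and never addresses the interior requirement, so your instinct there is sound. Read literally, Condition~3 cannot hold at $E=\max H(\Supp(\K))$, because a strictly convex $H$ has no interior local maximum. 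It can only be imposed on the energies the proof of Theorem~\ref{thm_general_reachability} actually visits.

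The ``main obstacle'' you flag is not one. Take $v=\nabla H(x)/\|\nabla H(x)\|$, or any unit $v$ if $\nabla H(x)=0$. The linear term is then $r\|\nabla H(x)\|\ge 0$, so $H(x+rv)\ge H(x)+\frac{\mu_H}{2}r^2$ directly. The paper instead uses $H(x+rv)+H(x-rv)\ge 2H(x)+\mu_H r^2$ for an arbitrary unit $v$. Neither needs the minimizer or a case split.

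What does need care is Step~2, and the paper slips here too. The inequality $\Delta H(x_i)+\epsilon\ge\epsilon$ requires $\Delta H(x_i)\ge 0$. Your range $\{E:\Delta H\le c\}\setminus H^\epsilon_{\tgt}$ includes centers with $-\epsilon<\Delta H(x_i)<0$. Their radii tend to $0$ as $\Delta H(x_i)\to-\epsilon$, so the uniform bound $r_i\ge\underline r$, and with it the count, fails there. The covering should stop at the band $[H_{\min},H_{\max}]$. Inside the band, Step~3 of Theorem~\ref{thm_general_reachability} (Assumption~\ref{ass_target_set} plus ergodicity) takes over, so Condition~2 is only needed for $0<\Delta H\le c$. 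Finally, shrinking radii by a constant does not keep $x\pm rv$ in $\X$ for centers on $\partial\X$. Like the paper, you are tacitly using the strong-convexity inequality on the whole ball.
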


\begin{proof}
We construct a canonical assignment set and then verify the three conditions of Theorem~\ref{thm_general_reachability}.

\medskip
\noindent\textbf{Step 1: Canonical local construction.}
Let 
\[
\X_c:=\{x\in \X:\Delta H(x)\le c\}\setminus H_{\tgt}^\epsilon.
\]
For every $x \in \X_c$, by Assumptions~\ref{ass_reachability_of_the_target_set}
and~\ref{ass_lowest_speed}, choose an optimal control $u_x:(0,T_\epsilon^\star(x)] \to U$, choose an optimal control $u_x:(0, T_\epsilon^\star(x)] \to U$ such that 
\[
\phi(T_\epsilon^\star(x),x,u_x) \in \partial H_{\tgt}^\epsilon, \,
v_\epsilon(x) = \frac{\Delta H(x) + \epsilon}{T_\epsilon^\star(x)}
\]
Define the certified radius
\[
r(x)
:=
\frac{(v_\epsilon(x)-v_0) T_\epsilon^\star(x)}
{L_H\bigl(1+e^{LT_\epsilon^\star(x)}\bigr)}.
\]
Since $v_0<\underline v_\epsilon\le v_\epsilon(x)$, we have $r(x)>0$.
Moreover,
\begin{align*}
 & \Delta H(\phi(T_\epsilon^\star(x),x,u_x))
+v_0T_\epsilon^\star(x)
+L_Hr(x)e^{LT_\epsilon^\star(x)}\\
= &
\Delta H(x)-L_Hr(x),
\end{align*}
so each triple $(x,r(x),u_x)$ satisfies Condition~\ref{condition1} of
Theorem~\ref{thm_general_reachability}.

\noindent\textbf{Step 2: Uniform lower bound on the certified radii.}
According to Assumption~\ref{ass_lowest_speed}: $v_\epsilon(x)\geq \underline{v_\epsilon}>0,
v_\epsilon(x)=\frac{\Delta H(x_i)+\epsilon}{T_\epsilon^\star(x)},$
we have $T_\epsilon^\star(x) \leq \frac{\Delta H(x)+\epsilon}{\underline{v_\epsilon}}$.
Now recall that $r
=
\frac{\Delta H(x)+\epsilon-v_0 T_\epsilon^\star(x)}
{L_H\bigl(1+e^{LT_\epsilon^\star(x)}\bigr)}$. Then
\[
\Delta H(x)+\epsilon-v_0 T_\epsilon(x)^\star
\geq
(\Delta H(x)+\epsilon)\left(1-\frac{v_0}{\underline{v_\epsilon}}\right)> 0,
\]
because of 
$0<v_0<\underline{v_\epsilon}$.

Therefore, for each fixed $x \in \X_c$, we obtain the lower bound of radius $r(x)$
\begin{align}
r(x) & 
\geq
\frac{
(\Delta H(x)+\epsilon)\left(1-\frac{v_0}{\underline{v_\epsilon}}\right)
}{
L_H\left(
1+\exp\!\left(\frac{L(\Delta H(x)+\epsilon)}{\underline{v_\epsilon}}\right)
\right)
} \notag \\
& \ge \frac{
\epsilon \left(1-\frac{v_0}{\underline{v_\epsilon}}\right)
}{2 L_H \exp(\frac{L(L_H D_{\mathcal{X}} +\epsilon)}{\underline{v_\epsilon}})
}, \label{lower_bound_r_i}
\end{align}
where equation~\eqref{lower_bound_r_i} holds from the fact that $H(x)$ is $L_H$-continuous.

\noindent\textbf{Step 3: Each certified ball covers a nontrivial energy interval.}
Fix $x\in\X_c$ and consider the ball $\cB_{r(x)}(x)$.
By Assumption~\ref{ass_strong_convexity}, for any $\|v\|=1$, we have
\begin{align*}
H(x+rv)+H(x-rv)\ge2H(x)+ \mu_H r^2,
\end{align*}
so at least one of $x\pm rv$ is larger than the average value, that is to say, 
\begin{align}
\label{eq:lower_bound}
\max \{H(x+rv),H(x-rv)\}\geq H(x)+\frac{\mu_H }{2}r^2.
\end{align}
For any $y_1, y_2$ and $y\in \cB_{r(x)}(x)$, according to~\eqref{eq:lower_bound} it follows that
\begin{align*}
& \max_{y_1,y_2\in \cB_{r(x)}(x)} \bigl(H(y_1)-H(y_2)\bigr)\\
& \ge \max_{y\in \cB_{r(x)}(x)} \bigl(H(y)-H(x)\bigr)\\
& \ge \frac{\mu_{H}}{2}r(x)^2.
\end{align*}
If $H_+^\star\notin H(\cB_{r(x)}(x))$, we have: 
\begin{align*}
    &\max_{y_1,y_2\in \cB_{r(x)}(x)} |\Delta H(y_1)-\Delta H(y_2)|\\
    &=\max_{y_1,y_2\in \cB_{r(x)}(x)} |H(y_1)-H(y_2)|\\
    &\geq \frac{\mu_{H}}{2}r(x)^2.
\end{align*}
And if $H_+^\star\in H(\cB_{r(x)}(x))$, since~\eqref{eq:lower_bound} holds, there exists at least one $y^\star \in \cB_{r(x)}(x)$ such that $H(x)+\frac{\mu_{H}}{2}r(x)^2 \le H(y^\star) \le \max H(\cB_{r(x)}(x))$. Thus $[H(x),H(x)+\frac{\mu_{H}}{2}r(x)^2]\subseteq H(\cB_{r(x)}(x))$, and therefore we have:
\begin{align*}
    \max\{|H(x)-H_+^\star|,|H(x)+\frac{\mu_{H}}{2}r(x)^2-H_+^\star|\}\geq \frac{\mu_{H}}{4}r(x)^2.
\end{align*}
Consequently, for any $y_1, y_2 \in \cB_{r(x)}(x)$, we have
\begin{align}
&\max_{y_1,y_2\in B_r(x)} |\Delta H(y_1)-\Delta H(y_2)| \notag\\
& \ge \max_{y_1\in B_r(x)}|H(y_1)-H_+^\star| \label{eq_distance_lower_bound}\\
& \geq \frac{\mu_{H}}{4}r(x)^2,\notag
\end{align}
where inequality~\eqref{eq_distance_lower_bound} holds since we can chose $y_2 \in \cB_{r(x)}(x)$ such that $H(y_2) = H_+^*$. 

Above all, we know that $\forall x \in \X_c$, we have
\begin{align}
& \max_{y_1,y_2\in \cB_{r(x)}(x)} |\Delta H(y_1)- \Delta H(y_2)| \ge \frac{\mu_{H}}{4}r(x)^2 \notag\\
\ge & \frac{\mu_{H} (1-\frac{v_0}{\underline{v_\epsilon}})^2}{16L_H^2}\frac{\epsilon^2}{\exp(\frac{2L(L_HD_{\mathcal{X}}+\epsilon)}{\underline{v_\epsilon}})} \label{eq:energy_interval_lower}
\end{align}

\noindent\textbf{Step 4: Finite covering of the relevant energy interval.}
Let \[
[H_1,H_2]:=H(\X_c)\cup H(H_{\tgt}^\epsilon).
\]
For every $E\in[H_1,H_2]$, choose any $x\in\X_c$ with $H(x)=E$.
Then $E\in H(\cB_{r(x)}(x))$, so the family $\{H(\cB_{r(x)}(x))\}_{x\in\X_c}$ covers $[H_1,H_2]$.
Since $[H_1,H_2]$ is compact and 
\begin{align*}
& \max_{y_1,y_2\in \cB_{r(x)}(x)} |H(y_1)-H(y_2)| \geq \frac{\mu_{H}}{4}r(x)^2\\
\ge & \frac{\mu_{H} (1-\frac{v_0}{\underline{v_\epsilon}})^2}{16L_H^2}\frac{\epsilon^2}{\exp(\frac{2L(L_HD_{\mathcal{X}}+\epsilon)}{\underline{v_\epsilon}})}
\end{align*} there exists a finite subcover
\[
[H_1,H_2]\subseteq \bigcup_{i=1}^N \cB_{r_i}(x_i).
\]
For each selected center $x_i$, define $r_i:=r(x_i), u_i:=u_{x_i},$ and let
\[
\K:=\{(x_i,r_i,u_i)\}_{i=1}^N,
\]
where this finite selection implies
\[
[H_1,H_2]\subseteq H(\Supp(\K)),
\]
which verifies Condition~\ref{condition2} of
Theorem~\ref{thm_general_reachability}.

\noindent\textbf{Step 5: Ergodic Coverage.}
Moreover, under Assumption~\ref{ass_dim_ergodic_components},
each energy layer in the relevant range is itself a unique ergodic component.
Since $\Supp(\K)$ intersects every energy level in $[H_1,H_2]$,
Condition~\ref{condition3} of
Theorem~\ref{thm_general_reachability} also holds.

Therefore all three conditions of
Theorem~\ref{thm_general_reachability} are satisfied, and the resulting
chain policy $\pi_{\K}$ guarantees that for almost every $x_0\in S_0$,
there exists $t<\infty$ such that
\[
\phi(t,x_0,\pi_{\K})\in S_{\tgt}.
\]

\noindent\textbf{Step 6: Sample complexity bound.}
By~\eqref{eq:energy_interval_lower}, every selected interval $\cB_{r_i}(x_i)$
has length at least $\eta$. Hence a greedy interval-covering argument on
$[H_1,H_2]$ gives
\begin{align*}
& N \le \frac{H_2-H_1}{\frac{\mu_{H} (1-\frac{v_0}{\underline{v_\epsilon}})^2}{16L_H^2}\frac{\epsilon^2}{\exp(\frac{2L(L_HD_{\mathcal{X}}+\epsilon)}{\underline{v_\epsilon}})}}\\
& =
(H_2-H_1)\,
\frac{16L_H^2}
{\mu_H\left(1-\frac{v_0}{\underline v_\epsilon}\right)^2}
\frac{
\exp\!\left(
\frac{2L(L_HD_{\X}+\epsilon)}{\underline v_\epsilon}
\right)}
{\epsilon^2}.
\end{align*}
This completes the proof.
\end{proof}

\subsection{Finite Time Reachability}
\label{sec:finite_time}

The previous subsection establishes a general reachability theorem and the existence of a chain policy that realizes it. We now refine this qualitative result into a finite-time guarantee by deriving a uniform upper bound on the time required for the chain policy to drive the system to the target set. The key additional assumption is the uniform bounds on the uncontrolled hitting times:

\begin{assumption}[Upper Bound of Hitting Time]
\label{ass_upper_bound_return_time}
    Assume that there are upper bounds of the return time:
\begin{enumerate}
    \item Return time to the support set $\Supp(\K)$: $\forall x$ satisfies $H(x)\in [H_{1},H_{2}]$, there exists a finite time $T_1 > 0, 
    \mathrm{s.t.}$ $\min_{t\in(0,T_1]}\mathrm{d}(\phi(t,x,0),\Supp(\K))=0$
    \item Reaching time to the target set $S_{\tgt}$: $\forall x$ satisfies $H(x) \in [H_{\min},H_{\max}]$, there exists a finite $ T_2 > 0, \mathrm{s.t.}$ $\min_{t\in(0,T_2]}\mathrm{d}(\phi(t,x,0),S_{\tgt})=0$
\end{enumerate}
\end{assumption}

Under this assumption, the maximal time needed to reach the target set can be bounded explicitly.

\begin{theorem}[Finite-Time Reachability]
Let $\mathcal{K}$ be an assignment set satisfying the conditions of Theorem~\ref{thm_general_reachability}, and define $\tau_{\min}:=\min_i \tau_i$. Under Assumption~\ref{ass_upper_bound_return_time}, the time required to reach the target set from any initial state $x\in S_0$ is uniformly bounded by
\[
T_{\max} \le \frac{L_H D_{\mathcal X}}{v_0}\Bigl(1+\frac{T_1}{\tau_{\min}}\Bigr) + T_2.
\]
\end{theorem}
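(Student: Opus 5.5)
We bound the closed-loop trajectory from $x\in S_0$ by splitting it into three kinds of time intervals: controlled segments executed while the state is in $\Supp(\K)$, zero-input excursions outside $\Supp(\K)$ before the energy band $H_{\tgt}$ is reached, and the final zero-input phase inside $H_{\tgt}$. We bound the total length of each kind separately and add the bounds.

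\textbf{Controlled time.} By \eqref{eq:DeltaH_decrease_proof} in Step~1 of the proof of Theorem~\ref{thm_general_reachability}, every controlled segment of length $\tau_i$ started in $\Supp(\K)$ decreases $\Delta H$ by at least $v_0\tau_i$. Zero-input phases conserve $H$, so $\Delta H$ is nonincreasing along the whole closed-loop trajectory. We need the total decrease of $\Delta H$ before entering $H_{\tgt}$. Since $\Delta H$ is $L_H$-Lipschitz and $H_{\tgt}\neq\varnothing$, for $x\in S_0$ we have $\Delta H(x)\le \Delta H(x')+L_H\|x-x'\|\le L_H D_{\X}$, with $x'\in H_{\tgt}\cap\X$. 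Hence the sum of the $\tau_i$ over segments executed before the trajectory is in $H_{\tgt}$ is at most $L_H D_{\X}/v_0$. Each such segment has length at least $\tau_{\min}$, so there are at most $L_HD_{\X}/(v_0\tau_{\min})$ of them.

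\textbf{Excursion time.} After a controlled segment, the state $y'$ satisfies $\Delta H(y')\le c$. By Condition~\ref{condition2}, $H(y')\in H(\Supp(\K))\subseteq[H_1,H_2]$. By item~1 of Assumption~\ref{ass_upper_bound_return_time}, the zero-input flow reaches $\Supp(\K)$ within time $T_1$; since $\Supp(\K)$ is closed, the condition $\mathrm{d}=0$ means actual entry. At that point the policy selects a control snippet again. There is at most one excursion per controlled segment, plus possibly an initial one if $x_0\notin\Supp(\K)$. The initial excursion can be absorbed into the count by noting that the number of segments $\lfloor\cdot\rfloor$ plus one is at most the ceiling-free bound $L_HD_{\X}/(v_0\tau_{\min})$ plus slack; if it cannot be absorbed, one extra $T_1$ must be added. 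This gives an excursion time of at most $T_1\cdot L_HD_{\X}/(v_0\tau_{\min})$.

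\textbf{Final phase.} Once the state lies in $H_{\tgt}$, we argue as in Step~3 of Theorem~\ref{thm_general_reachability}, now with item~2 of Assumption~\ref{ass_upper_bound_return_time}. If the policy is in zero-input mode, the trajectory stays on $\Sigma_E$ with $E\in[H_{\min},H_{\max}]$ and comes within distance $0$ of the compact set $S_{\tgt}$, hence enters it, within $T_2$. Summing the three contributions gives
\[
T_{\max}\le \frac{L_HD_{\X}}{v_0}+\frac{L_HD_{\X}}{v_0\tau_{\min}}T_1+T_2
=\frac{L_H D_{\X}}{v_0}\Bigl(1+\frac{T_1}{\tau_{\min}}\Bigr)+T_2 .
\]

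\textbf{Main obstacle.} The delicate step is the final phase. Inside $H_{\tgt}$ the chain policy may still trigger controlled segments whenever the state lies in $\Supp(\K)$, and such segments could push the state out of the band or interrupt the free flow toward $S_{\tgt}$. I would first try to use Condition~\ref{condition1}: it places the endpoint of each segment at least $\epsilon$ deep inside the band, and $\Delta H$ never increases, so the state stays in $H_{\tgt}$. I would then interpret the theorem, as Theorem~\ref{thm_general_reachability} implicitly does, with the policy switching to $u_0\equiv0$ once $H_{\tgt}$ is reached, so that the $T_2$ bound applies directly. A secondary technical point is the endpoint counting, namely ceilings versus the stated ratio, which I would handle by bounding the number of segments through the total decrease rather than through $N_*$.
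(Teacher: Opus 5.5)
Your three-part accounting is the same as the paper's: controlled time at most $L_HD_{\X}/v_0$, at most $L_HD_{\X}/(v_0\tau_{\min})$ segments each charged one $T_1$, then $T_2$. However, the excursion count does not close as written. You charge each excursion to the segment that precedes it, which leaves the initial excursion uncharged, and it cannot be absorbed. Since $\lfloor a\rfloor+1>a$ for every real $a$, an extra $T_1$ would break the stated bound. The paper instead pairs each excursion with the segment that \emph{follows} it, $x_i\xrightarrow{t_{1,i}}y_i\xrightarrow{t_{2,i}}x_{i+1}$. Zero-input stretches conserve $H$, so the band $H_{\tgt}$ can only be entered during a segment, and every stretch before that moment precedes a segment. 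Hence the number of stretches is at most the number of segments, the initial one included, and the stretch after the last segment belongs to the $T_2$ phase. There is also a smaller slip in the same step. Bounding the controlled time by $\Delta H(x_0)/v_0$ ignores that the last segment may overshoot to $\Delta H<0$. Telescope instead to $\Delta H(x_0)-\Delta H(z)\le |H(x_0)-H(z)|\le L_HD_{\X}$ for the endpoint $z$. The paper's line $\sum_i t_{2,i}\le\Delta H(x_0)/v_0$ has the same slip and the same repair.

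On the final phase, the paper simply adds $T_2$ and never addresses the issue you raise, but neither of your fixes works. Condition~1 says nothing about $\epsilon$. Only the canonical construction in Theorem~\ref{thm_existence_chain_policy} puts the \emph{center}'s endpoint on $\partial H_{\tgt}^\epsilon$, and $\Delta H$ is controlled only at segment endpoints, not along segments. Switching to $u_0\equiv 0$ in the band proves a statement about a different policy. The budget you already have handles the actual $\pi_{\K}$. The inequality \eqref{eq:DeltaH_decrease_proof} holds for a segment fired from any $y\in\Supp(\K)$, inside the band or not. Also $\Delta H(x_0)-\Delta H(z)\le L_HD_{\X}$ for every visited $z$. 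So all segments ever fired number at most $L_HD_{\X}/(v_0\tau_{\min})$ and last at most $L_HD_{\X}/v_0$ in total. Each zero-input stretch either enters $\Supp(\K)$ first or reaches $S_{\tgt}$ first. In the first case it does so within $T_1$ by item~1 of Assumption~\ref{ass_upper_bound_return_time} (Condition~2 keeps $H$ in $H(\Supp(\K))$), and it is followed by a segment. In the second case the stretch must start in the band, because $H$ is conserved and $S_{\tgt}\subseteq H_{\tgt}$, and it takes at most $T_2$ by item~2. If $S_{\tgt}$ were never reached, infinitely many segments would fire, contradicting the budget. Summing the three contributions gives the stated bound for $\pi_{\K}$ as defined, with no need to locate the instant the band is entered.
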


\begin{proof}
  For any initial state $x_0 \in S_0$, Theorem~\ref{thm_general_reachability} ensures that each time the trajectory leaves the support set $\Supp(\mathcal{K})$, it returns to the support within time at most $T_1$. 

Accordingly, we construct two sequences of states $\{x_i\}_{i=0,\dots N}$ and $\{y_i\}_{i=0,\dots, N}$ as follows. Starting from $x_i$, there exists a time $0 \le t_{1,i} \le T_1$ such that
\[
y_i = \phi(t_{1,i}, x_i, 0) \in \Supp(\mathcal{K}).
\]
From $y_i$, applying the control $u_i\in \mathcal{U}^{(0,\tau_i]}$ for time $t_{2,i}:=\tau_i$ yields
\[
x_{i+1} = \phi(t_{2,i}, y_i, u_i),
\]
with the energy decrease condition
\[
\Delta H(x_{i+1}) + v_0 t_{2,i} \le \Delta H(y_i),
\]
for all $i = 0,\dots,N$.

Thus the policy, iteratively induces the sequence $x_i\xrightarrow {t_{1,i}} y_i\xrightarrow {t_{2,i}} x_{i+1}$. 
    
Now, let $N$ be the smallest index such that $x_N \in H_{\tgt}^\epsilon$. Then
\begin{align*}
N\tau_{\min}\leq \sum_{i=1}^N t_{2,i}\leq \frac{\Delta H(x_0)}{v_{0}},
\end{align*}
which implies
\[
N\leq \left\lfloor \frac{\Delta H(x_0)}{v_{0}\tau_{\min}}\right\rfloor.
\]

Since $t_{1,i}\leq T_1$, the total time to reach $H_{\tgt}$, i.e. $\overline T$ s.t. 
\begin{align*}
\forall x_0\in S_0,\ \exists\, T\leq \overline{T}\ \mathrm{s.t.}\ \phi(T,x_0,\pi_{\mathcal{K}})\in H_{\tgt},
\end{align*}
satisfies 
\begin{align}
&\overline{T} \leq \sum_{i=1}^N t_{2,i}+NT_1\\
&\leq \frac{\Delta H(x_0)}{v_{0}} + \left\lfloor \frac{\Delta H(x_0)}{v_{0}\tau_{\min}}\right\rfloor T_1\\
&\leq \frac{L_{H}D_{\mathcal{X}}}{v_{0}}\Bigl(1+\frac{T_1}{\tau_{\min}}\Bigr).
\end{align}
Finally, by Assumption~\ref{ass_upper_bound_return_time}, the maximum time $T_{\max}$ to reach $S_{\tgt}$ from $S_0$ satisfies
\begin{align*}
T_{\max}\leq \overline{T}+T_2
\leq \frac{L_{H}D_{\mathcal{X}}}{v_{0}}\Bigl(1+\frac{T_1}{\tau_{\min}}\Bigr)+T_2.
\end{align*}
\end{proof}

\subsection{From Expert Demonstrations to NCPs}
\label{sec:alg}
This subsection explains how the assignment set is constructed from expert demonstrations and how it induces the nonparametric chain policy introduced in Section~\ref{sec:def_chain_policy}.

Consider a data set $\cD=\{(x_j,u_j(\cdot),T_{j})\}_{j=1}^M$ consisting of $M$ expert trajectories, where for each tuple $(x_j,u_j(\cdot),T_{j})\in\cD$, the trajectory $\phi(t,x_j,u_j)$, $t\in(0,T_{j}]$ satisfies,  $x_{j} \in S_0$ and $\phi(T_{j},x_j,u_j) \in S_{\tgt}^\delta, \forall j = 1,\dots,M$, where $S_\mathrm{tgt}^\delta \subseteq H_\mathrm{\tgt}^\epsilon\cap S_\mathrm{tgt}$. The assignment set is obtained by extracting local control snippets and their certified radii along this trajectory. As Algorithm~\ref{alg:offline_assignment_set} shows, starting from an anchor time $s\in[0,T_{j})$, define
$
x_i:=\phi(s,x_j,u_j).
$
Given a small $v_0$, for each candidate duration $t\in(0,T_{j}-s]$, let $u_{i,t}$ be the restriction of $u_j$ to $(s,s+t]$, so that $\phi(t,x_i,u_{i,t})=\phi(s+t,x_j,u_j)$.
Its certified radius is
\begin{align*}
\label{eq:radius_bound_chain_policy}
r_i(t)=
\frac{\Delta H(x_i)-\Delta H(\phi(t,x_i,u_{i,t}))-v_0t}
{L_H+L_H e^{Lt}}.
\end{align*}
If $r_i(t)>0$, then $u_{i,t}$ is valid on $\cB_{r_i(t)}(x_i)$. We choose $t_i\in\arg\max_{t\in(0,T_{j}-s],\,r_i(t)>0} r_i(t)$, set $\tau_i:=t_i$, $u_i:=u_{i,t_i}$, and $r_i:=r_i(t_i)$, and add $(x_i,r_i,u_i)$ to $\K$. Then let
$
\sigma_i:=\inf\{\delta\in(0,\tau_i]\mid \phi(s+\delta,x_j,u_j)\in\partial\cB_{r_i}(x_i)\},
$
with $\sigma_i:=\tau_i$ if the set is empty. The next anchor is
$
x_{i+1}:=\phi(s+\sigma_i,x_j,u_j),
$ which is displayed in Fig~\ref{fig:alg}.
Repeating this procedure until the trajectory reaches $S_{\tgt}$, and then over all demonstrations in $\cD$, yields
$
\K=\{(x_i,r_i,u_i)\}_{i=1}^N.
$ After the assignment set is constructed, the NCP follows Remark~\ref{rem:excution_of_NCP} for $\forall x \in \X$.

\begin{algorithm}[htbp]
\caption{Assignment-Set Construction}
\label{alg:offline_assignment_set}
\begin{algorithmic}[1]
\State \textbf{Input:} $\cD=\{(x_j,u_j(\cdot),T_{j})\}_{j=1}^M$
\State \textbf{Output:} $\K$
\State $\K\gets\emptyset$
\For{$j=1,\dots,M$}
    \State $s\gets 0$
    \While{$s<T_{j}$ and $\phi(s,x_j,u_j)\notin S_{\tgt}$}
        \State $x_i\gets \phi(s,x_j,u_j)$
        \State choose $t_i\in\arg\max_{t\in(0,T_{j}-s],\,r_i(t)>0} r_i(t)$
        \If{no such $t_i$ exists}
            \State \textbf{break}
        \EndIf
        \State $\tau_i\gets t_i$, $u_i\gets u_{i,t_i}$, $r_i\gets r_i(t_i)$
        \State $\K\gets\K\cup\{(x_i,r_i,u_i)\}$
        \State $\sigma_i\gets \inf\{\delta:\|\phi(s+\delta,x_j,u_j)-x_i\|=r_i\}$
        \If{the set is empty}
            \State $\sigma_i\gets \tau_i$
        \EndIf
        \State $s\gets s+\sigma_i$
    \EndWhile
\EndFor
\end{algorithmic}
\end{algorithm}

\begin{figure}
    \hspace{-2em}
    \centering
    \includegraphics[width=0.8\linewidth]{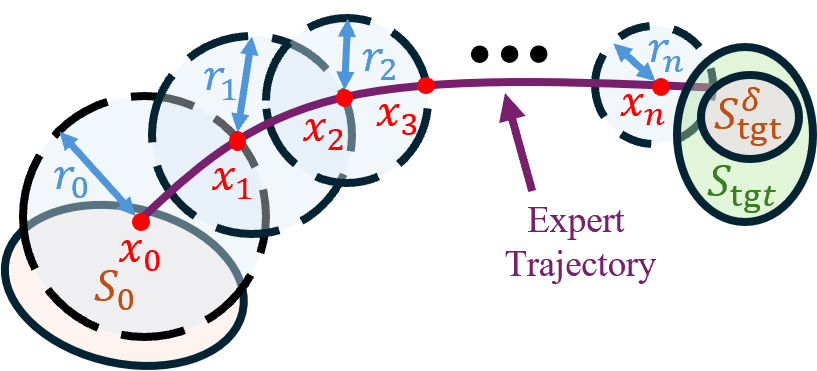}
    \caption{Assignment set construction.}
    \label{fig:alg}
\end{figure}

\section{Numerical Simulation}
\label{sec:numerical_simu}
We evaluate the proposed NCP on two systems: a spring-mass system and a pendulum, and compare them with a vanilla Behavior Cloning (BC) baseline. The state is written as $x=[q^\top,p^\top]^\top$, where $q$ and $p$ denote the generalized coordinates and momenta. The goal is to drive the state to a small neighborhood of a target state $x^*$, namely $S_{\tgt}=\{x:\|x-x^*\|\le \varepsilon\}$, where $\varepsilon = 0.1$. The BC policy is a three-layer multilayer perceptron with hidden sizes $(24,24,16)$, trained with Adam on the mean-squared imitation loss using learning rate $1.2\times 10^{-3}$, weight decay $5\times 10^{-4}$, and $40$ epochs. Expert demonstrations are generated offline by a nonlinear model predictive controller (NMPC) implemented in Python using CasADi and do-mpc.

For each system, we uniformly sample $500$ initial states from the set of states whose total energy is no greater than $\bar H$ for testing. For each number $M$ of expert demonstrations, we construct the proposed chain policy and train the BC baseline using the same $M$ trajectories. We report the success rate and the average reach time, where unsuccessful trajectories are assigned the simulation horizon. The horizon is $20\,\mathrm{s}$ for the spring-mass system and $150\,\mathrm{s}$ for the single pendulum. All experiments are conducted on a 3.2 GHz AMD Ryzen 7 7735HS CPU with 16 GB RAM.

\subsubsection{Spring-Mass}
The spring-mass dynamics are given by
\begin{align*}
x & =
\begin{bmatrix}
q\\
p
\end{bmatrix}
=
\begin{bmatrix}
q\\
m\dot{q}
\end{bmatrix},
H(x)=\frac{p^2}{2m}+\frac{k}{2}q^2,\\
\dot{x}
& =
\begin{bmatrix}
0 & 1\\
-1 & 0
\end{bmatrix}
\nabla H(x)
+
\begin{bmatrix}
0\\
1
\end{bmatrix}u.
\end{align*}
Here, $m=1$ is the mass, and $k=1$ is the spring stiffness. In the simulations, the control input is constrained to $u\in[-20,20]$ and $x^*=[0,0]^\top$.

\begin{figure}[htbp]
    \centering
    \hspace{-1em}
    \begin{subfigure}[b]{0.5\linewidth}
        \centering
        \includegraphics[width=\linewidth]{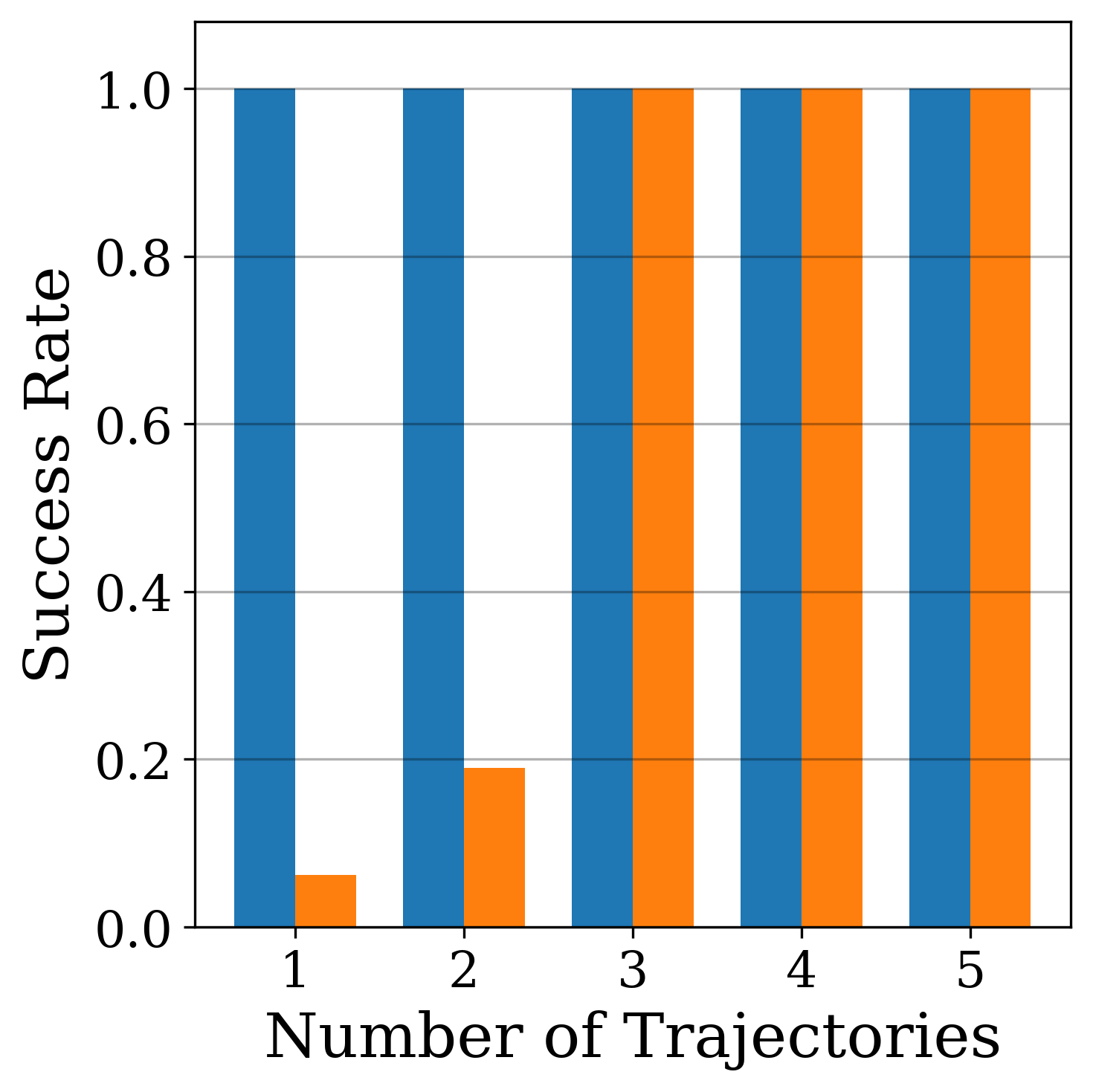}
        \caption{Success rate}
        \label{fig:spring_mass_success_rate}
    \end{subfigure}
    \hfill
    \begin{subfigure}[b]{0.5\linewidth}
        \centering
        \includegraphics[width=\linewidth]{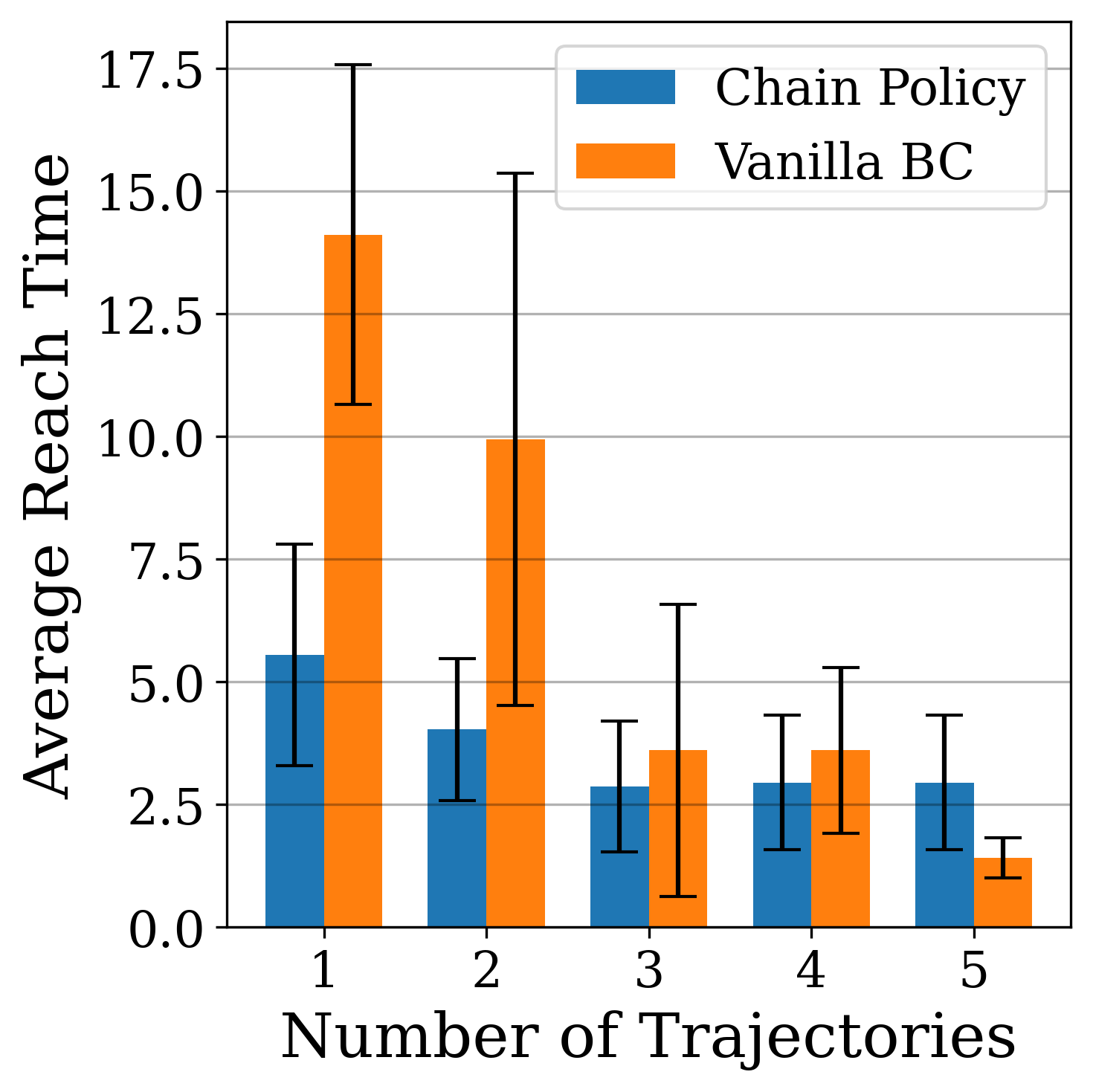}
        \caption{Average reach time}
        \label{fig:spring_mass_avg_time}
    \end{subfigure}
    \caption{Spring-mass system results.}
    \label{fig:spring_mass_two_subfigs}
\end{figure}

As shown in Fig.~\ref{fig:spring_mass_success_rate}, the proposed chain policy achieves a success rate of $1.0$ for all tested numbers of expert trajectories, indicating that the task can be solved reliably with very limited demonstration data. In contrast, vanilla BC performs poorly when only a small number of trajectories are available, with success rates of only $0.062$ and $0.19$ for $M=1$ and $M=2$, respectively. Fig.~\ref{fig:spring_mass_avg_time} further shows that the average reach time of the chain policy decreases from $5.54$s at $M=1$ to about $2.94$s when more demonstrations are provided. The largest improvement occurs between $M=1$ and $M=3$, after which the performance becomes nearly saturated. This suggests that adding a small number of expert trajectories is sufficient to substantially improve the efficiency of the learned assignment set in this relatively simple system.

\subsubsection{Single Pendulum}
The single pendulum dynamics are
\begin{align*}
x &=
\begin{bmatrix}
q\\
p
\end{bmatrix}
=
\begin{bmatrix}
\theta\\
m\ell^2\dot{\theta}
\end{bmatrix},
H(x)=\frac{p^2}{2m\ell^2}+mg\ell(1-\cos q),\\
\dot{x} &=
\begin{bmatrix}
0 & 1\\
-1 & 0
\end{bmatrix}
\nabla H(x)
+
\begin{bmatrix}
0\\
1
\end{bmatrix}u.
\end{align*}
Here, $m=1$ is the pendulum mass, $\ell=2$ is the pendulum length, and $g=9.81 \mathrm{m/s^2}$ is the gravitational acceleration. In the simulations, the control torque is constrained to $u\in[-20,20]$ and $x^*=[\pi,0]^\top$.

\begin{figure}[htbp]
    \centering
    \hspace{-1em}
    \begin{subfigure}[b]{0.5\linewidth}
        \centering
        \includegraphics[width=\linewidth]{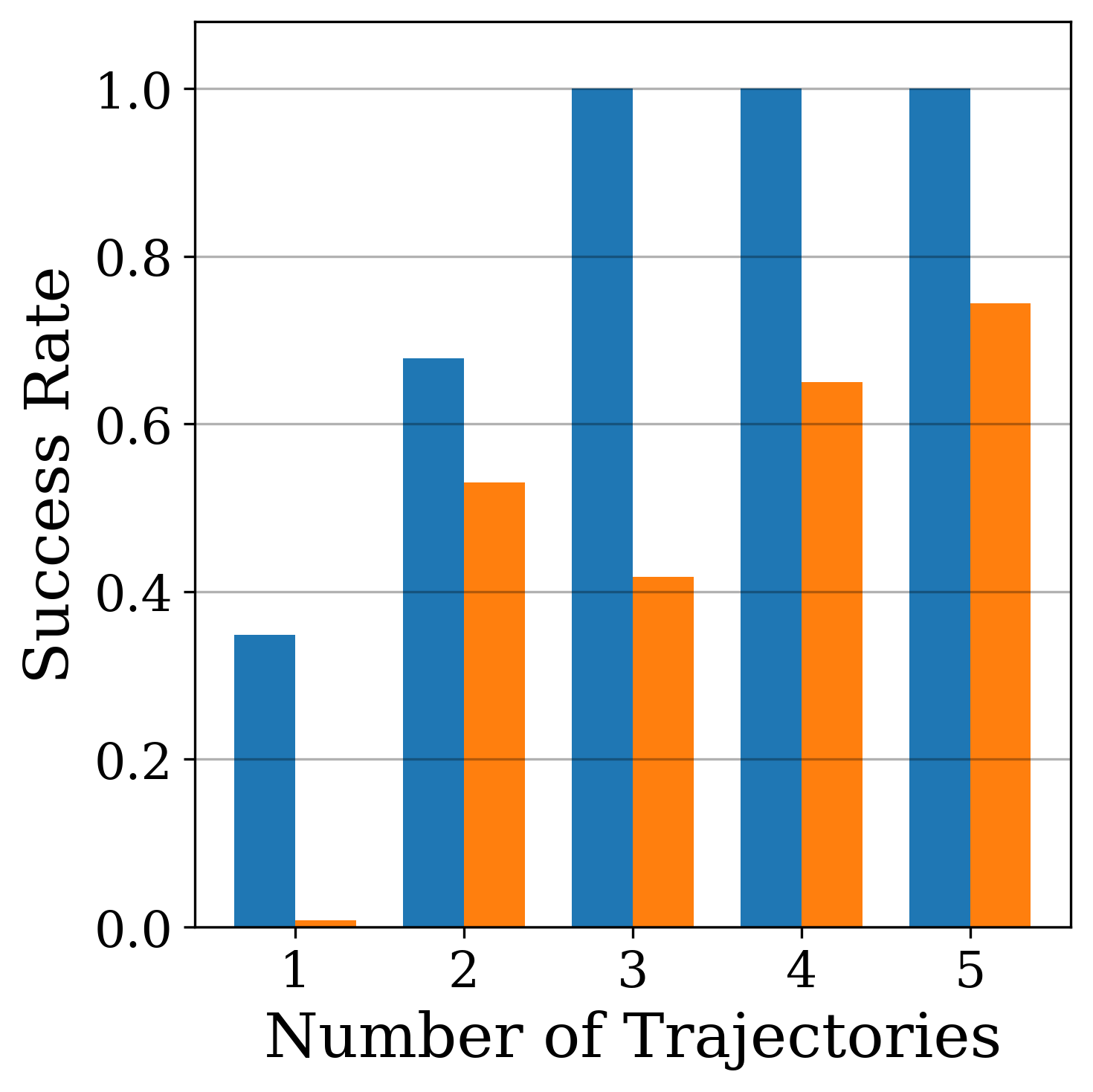}
        \caption{Success rate}
        \label{fig:single_pendulum_success_rate}
    \end{subfigure}
    \hfill
    \begin{subfigure}[b]{0.5\linewidth}
        \centering
        \includegraphics[width=\linewidth]{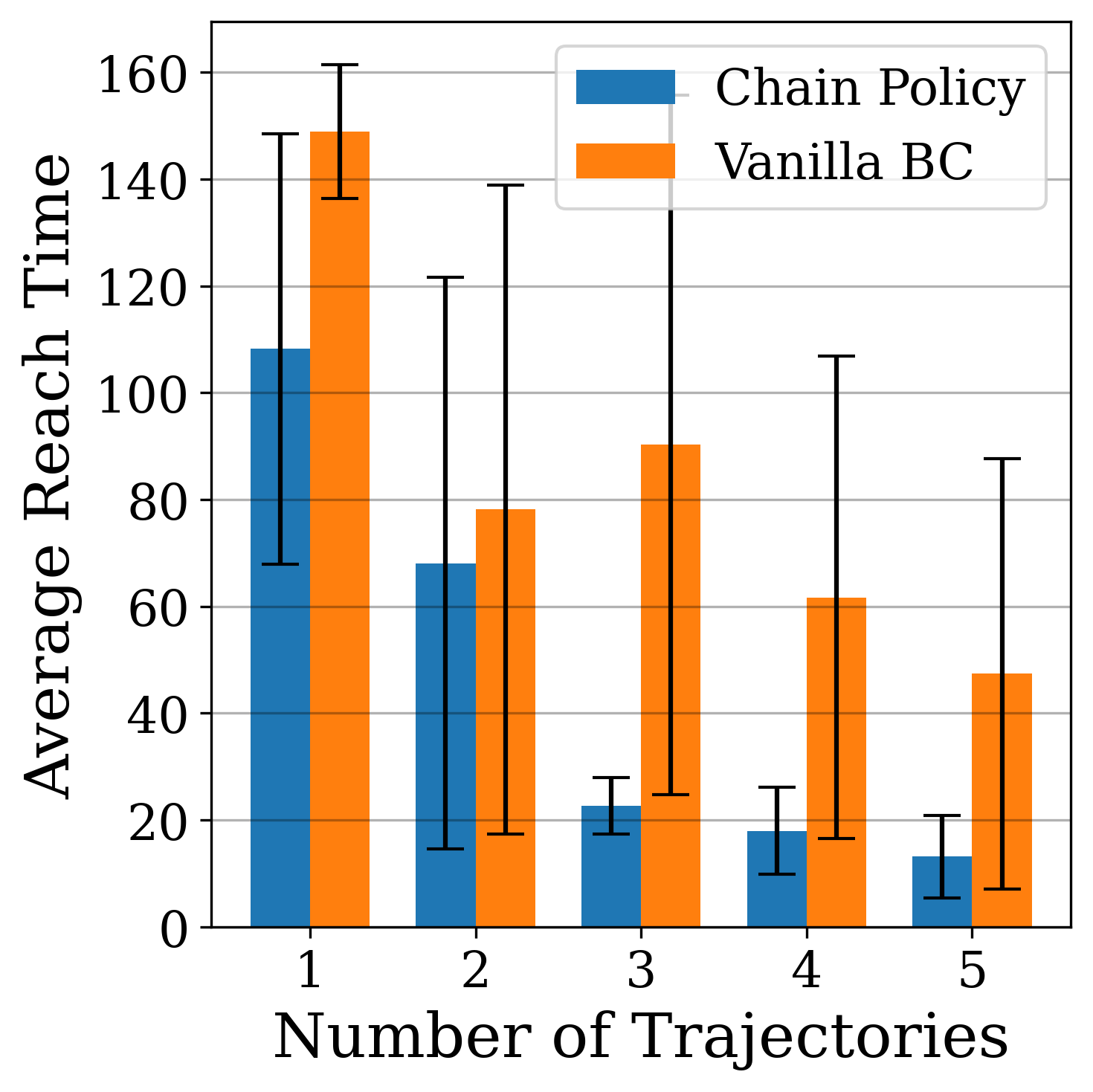}
        \caption{Average reach time}
        \label{fig:single_pendulum_avg_time}
    \end{subfigure}
    \caption{Single pendulum results.}
    \label{fig:single_pendulum_two_subfigs}
\end{figure}

In this task, as Fig~\ref{fig:single_pendulum_success_rate} shows, the success rate of the proposed chain policy increases from $0.348$ at $M=1$ to $0.678$ at $M=2$, and reaches $1.0$ for all $M\geq 3$. However, vanilla BC remains consistently inferior, with success rates $0.008$, $0.53$, $0.418$, $0.65$, and $0.744$ for $M=1,\dots,5$, respectively. Besides, the average reach time shown in Fig.~\ref{fig:single_pendulum_avg_time} also decreases significantly as the number of expert trajectories grows, from $114.71$s at $M=1$ to $13.16$s at $M=5$. This trend indicates that enlarging the assignment set with additional expert trajectories substantially improves the ability of the chain policy to find effective local control snippets and steer the system to the target set more efficiently. 

\section{Conclusions and Future Work}
\label{sec:conclusions_and_future_work}

This paper proposed a data-driven control framework for Hamiltonian systems that leverages physical structure as an inductive bias. Rather than learning policies over the entire state space, we exploit energy conservation and recurrence to construct control laws from a finite collection of locally verified trajectory segments. By combining controlled energy reduction with recurrence on invariant energy layers, we establish target-set reachability and derive finite-time guarantees under suitable conditions. Numerical examples demonstrate the effectiveness of the approach.

Future work will focus on extending these ideas to more general settings, including systems with multiple ergodic components and partial observability, as well as robust formulations that account for model uncertainty and energy dissipation~\cite{romero2024robust}. Another direction is to develop principled strategies for data selection and augmentation that improve coverage of the relevant low-dimensional structures while mitigating compounding errors~\cite{zhang2026action}.
\printbibliography

@article{dai2020semi,
  title={A semi-algebraic optimization approach to data-driven control of continuous-time nonlinear systems},
  author={Dai, Ting and Sznaier, Mario},
  journal={IEEE Control Systems Letters},
  volume={5},
  number={2},
  pages={487--492},
  year={2020},
  publisher={IEEE}
}

@article{guo2021data,
  title={Data-driven stabilization of nonlinear polynomial systems with noisy data},
  author={Guo, Ming and De Persis, Claudio and Tesi, Pietro},
  journal={IEEE Transactions on Automatic Control},
  volume={67},
  number={8},
  pages={4210--4217},
  year={2021},
  publisher={IEEE}
}

@inproceedings{strasser2021data,
  title={Data-driven control of nonlinear systems: Beyond polynomial dynamics},
  author={Str{\"a}sser, Raphael and Berberich, Julian and Allg{\"o}wer, Frank},
  booktitle={IEEE Conference on Decision and Control (CDC)},
  pages={4344--4351},
  year={2021},
  organization={IEEE}
}

@article{monshizadeh2025versatile,
  title={A versatile framework for data-driven control of nonlinear systems},
  author={Monshizadeh, Nima and De Persis, Claudio and Tesi, Pietro},
  journal={IEEE Transactions on Automatic Control},
  year={2025},
  note={to appear}
}

@article{boffi2020learning,
  title={Learning stability certificates from data},
  author={Boffi, Nicholas M. and Tu, Stephen and Matni, Nikolai and Slotine, Jean-Jacques and Sindhwani, Vikas},
  journal={arXiv preprint arXiv:2008.05952},
  year={2020}
}

@article{oymak2018nonasymptotic,
  title={Non-asymptotic Identification of Linear Dynamical Systems from a Single Trajectory},
  author={Oymak, Samet and Ozay, Necmiye},
  journal={arXiv preprint arXiv:1806.05722},
  year={2018}
}

@article{zheng2021nonasymptotic,
  title={Non-Asymptotic Identification of Linear Dynamical Systems Using Multiple Trajectories},
  author={Zheng, Yang and Li, Na},
  journal={IEEE Control Systems Letters},
  volume={5},
  number={5},
  pages={1693--1698},
  year={2021},
  publisher={IEEE}
}

@article{dean2020sample,
  title={On the sample complexity of the linear quadratic regulator},
  author={Dean, Sarah and Mania, Horia and Matni, Nikolai and Recht, Benjamin and Tu, Stephen},
  journal={Foundations of Computational Mathematics},
  volume={20},
  number={4},
  pages={633--679},
  year={2020},
  publisher={Springer}
}

@article{depersis2019formulas,
  title={Formulas for data-driven control: Stabilization, optimality, and robustness},
  author={De Persis, Claudio and Tesi, Pietro},
  journal={IEEE Transactions on Automatic Control},
  volume={65},
  number={3},
  pages={909--924},
  year={2020},
  publisher={IEEE}
}

@inproceedings{coulson2019data,
  title={Data-enabled predictive control: In the shallows of the DeePC},
  author={Coulson, Jeremy and Lygeros, John and Dörfler, Florian},
  booktitle={European Control Conference (ECC)},
  pages={307--312},
  year={2019},
  organization={IEEE}
}

@article{berberich2020data,
  title={Data-driven model predictive control with stability and robustness guarantees},
  author={Berberich, Julian and Köhler, Johannes and Müller, Matthias A. and Allgöwer, Frank},
  journal={IEEE Transactions on Automatic Control},
  volume={66},
  number={4},
  pages={1702--1717},
  year={2021},
  publisher={IEEE}
}

@article{werner2024sample,
  title={On the sample complexity of stabilizing linear dynamical systems from data},
  author={Werner, Sebastian W. and Peherstorfer, Benjamin},
  journal={Foundations of Computational Mathematics},
  volume={24},
  number={3},
  pages={955--987},
  year={2024}
}

@inproceedings{hu2022sample,
  title={On the sample complexity of stabilizing LTI systems on a single trajectory},
  author={Hu, Yiheng and Wierman, Adam and Qu, Guannan},
  booktitle={Advances in Neural Information Processing Systems},
  volume={35},
  pages={16989--17002},
  year={2022}
}

@inproceedings{toso2025learning,
  title={Learning stabilizing policies via an unstable subspace representation},
  author={Toso, Leonardo F. and Ye, Lintao and Anderson, James},
  booktitle={IEEE Conference on Decision and Control (CDC)},
  pages={7543--7550},
  year={2025},
  organization={IEEE}
}

@book{vapnik1998statistical,
  title={Statistical Learning Theory},
  author={Vapnik, Vladimir N.},
  year={1998},
  publisher={Wiley}
}

@book{shalev2014understanding,
  title={Understanding Machine Learning: From Theory to Algorithms},
  author={Shalev-Shwartz, Shai and Ben-David, Shai},
  year={2014},
  publisher={Cambridge University Press}
}

@book{bishop2006pattern,
  title={Pattern Recognition and Machine Learning},
  author={Bishop, Christopher M.},
  year={2006},
  publisher={Springer}
}

@article{liu2025safety,
  title={Safety-Critical Control via Recurrent Tracking Functions},
  author={Liu, Jixian and Mallada, Enrique},
  journal={arXiv preprint arXiv:2510.01147},
  year={2025}
}

@inproceedings{liu2025recurrent,
  title={Recurrent control barrier functions: A path towards nonparametric safety verification},
  author={Liu, Jixian and Mallada, Enrique},
  booktitle={2025 IEEE 64th Conference on Decision and Control (CDC)},
  pages={7721--7727},
  year={2025},
  organization={IEEE}
}

@inproceedings{siegelmann2023recurrence,
  title={A recurrence-based direct method for stability analysis and gpu-based verification of non-monotonic lyapunov functions},
  author={Siegelmann, Roy and Shen, Yue and Paganini, Fernando and Mallada, Enrique},
  booktitle={2023 62nd IEEE Conference on Decision and Control (CDC)},
  pages={6665--6672},
  year={2023},
  organization={IEEE}
}

@article{sibai2026recurrence,
  title={Recurrence of nonlinear control systems: Entropy, bit rates, and finite alphabet controllers},
  author={Sibai, Hussein and Mallada, Enrique},
  journal={Nonlinear Analysis: Hybrid Systems},
  volume={59},
  pages={101649},
  year={2026},
  publisher={Elsevier}
}

@article{siegelmann2025data,
  title={Data-driven Practical Stabilization of Nonlinear Systems via Chain Policies: Sample Complexity and Incremental Learning},
  author={Siegelmann, Roy and Mallada, Enrique},
  journal={arXiv preprint arXiv:2510.03982},
  year={2025}
}

@inproceedings{zhang2026action,
  title={Action Chunking and Data Augmentation Yield Exponential Improvements in Behavior Cloning for Continuous Spaces},
  author={Zhang, Thomas TCK and Pfrommer, Daniel and Pan, Chaoyi and Matni, Nikolai and Simchowitz, Max},
  booktitle={International Conference on Learning Representations (ICLR)},
  year={2026},
  url={https://openreview.net/forum?id=jiWXDvw1Lf}
}

@article{romero2024robust,
  title={A robust adaptive velocity observer for mechanical systems transformed in cascade form},
  author={Romero, Jose Guadalupe},
  journal={Automatica},
  volume={165},
  pages={111671},
  year={2024},
  publisher={Elsevier}
}

@inproceedings{lew2022simple,
  title={A simple and efficient sampling-based algorithm for general reachability analysis},
  author={Lew, Thomas and Janson, Lucas and Bonalli, Riccardo and Pavone, Marco},
  booktitle={Learning for Dynamics and Control Conference},
  pages={1086--1099},
  year={2022},
  organization={PMLR}
}

@book{viana2016foundations,
  title={Foundations of ergodic theory},
  author={Viana, Marcelo and Oliveira, Krerley},
  number={151},
  year={2016},
  publisher={Cambridge University Press}
}

@incollection{bowen1975ergodic,
  title={The ergodic theory of Axiom A flows},
  author={Bowen, Rufus and Ruelle, David},
  booktitle={The theory of chaotic attractors},
  pages={55--76},
  year={1975},
  publisher={Springer}
}

@article{plante1972anosov,
  title={Anosov flows},
  author={Plante, Joseph F},
  journal={American Journal of Mathematics},
  volume={94},
  number={3},
  pages={729--754},
  year={1972},
  publisher={JSTOR}
}

@article{hopf1971ergodic,
  title={Ergodic theory and the geodesic flow on surfaces of constant negative curvature},
  author={Hopf, Eberhard},
  year={1971}
}
\end{document}